\newcommand{\mylabel}[2]{#2\def\@currentlabel{#2}\label{#1}}
\newcommand{\Linf}{{L^\infty}}
\newtheorem{theorem}{Theorem}[section]
\newtheorem{lemma}[theorem]{Lemma}
\newtheorem{example}[theorem]{Example}
\newtheorem{remark}[theorem]{Remark}
\numberwithin{equation}{section}
\newif\ifcolor
\newenvironment{aalign}
{ 
	\csname align*\endcsname
}
{ 
	\csname endalign*\endcsname
}
\newcommand\numberthis{\addtocounter{equation}{1}\tag{\theequation}}
\begin{document}

\raggedbottom

\title{Traveling Waves for the Nonlinear Variational Wave Equation}

\author[K. Grunert]{Katrin Grunert}
\address{Department of Mathematical Sciences\\ NTNU Norwegian University of Science and Technology\\ NO-7491 Trondheim\\ Norway}
\email{katrin.grunert@ntnu.no}
\urladdr{\url{http://www.ntnu.no/ansatte/katrin.grunert}}

\author[A. Reigstad]{Audun Reigstad}
\address{Department of Mathematical Sciences\\ NTNU Norwegian University of Science and Technology\\ NO-7491 Trondheim\\ Norway}
\email{audun.reigstad@uit.no}
\urladdr{\url{http://www.ntnu.no/ansatte/audun.reigstad}}

\thanks{Research supported by the grants {\it Waves and Nonlinear Phenomena (WaNP)} and {\it Wave Phenomena and Stability --- a Shocking Combination (WaPheS)} from the Research Council of Norway.}  
\subjclass[2020]{Primary: 35C07, 35L70; Secondary: 35B60}
\keywords{Nonlinear variational wave equation, traveling waves, composite waves}

\begin{abstract} 
We study traveling wave solutions of the nonlinear variational wave equation. In particular, we show how to obtain global, bounded, weak traveling wave solutions from local, classical ones. The resulting waves consist of monotone and constant segments, glued together at points where at least one one-sided derivative is unbounded.

Applying the method of proof to the Camassa--Holm equation, we recover some well-known results on its traveling wave solutions.
\end{abstract}

\maketitle

\section{Introduction}

We consider the nonlinear variational wave (NVW) equation
\begin{equation}
\label{eq:nvw}
	u_{tt}-c(u)(c(u)u_{x})_{x}=0,
\end{equation}
with initial data
\begin{equation}
\label{eq:initdat}
	u|_{t=0}=u_0 \quad \text{and} \quad u_t|_{t=0}=u_1.
\end{equation}
Here, $u=u(t,x)$ where $t\geq 0$ and $x\in\mathbb{R}$. 

The NVW equation was introduced by Saxton in \cite{Saxt:89}, where it is derived by applying the variational principle to the functional
\begin{equation*}
\int_{0}^{\infty}\int_{-\infty}^{\infty}(u_{t}^{2}-c^{2}(u)u_{x}^{2})\,dx\,dt.
\end{equation*}
The equation appears in the study of liquid crystals, where it describes the director field of a nematic liquid crystal, and where the function $c$ is given by
\begin{equation}
\label{eq:c}
	c^{2}(u)=\lambda_{1}\sin^{2}(u)+\lambda_{2}\cos^{2}(u),
\end{equation}
where $\lambda_{1}$ and $\lambda_{2}$ are positive physical constants. We refer to \cite{HS:91} and \cite{Saxt:89} for information about liquid crystals, and the derivation of the equation. 

It is well known that derivatives of solutions of the NVW equation can develop singularities in finite time even for smooth initial data, see \cite{GlaHunZh:96}. A singularity means that either $u_{x}$ or $u_{t}$ becomes unbounded pointwise while $u$ remains continuous. The continuation past singularities is highly nontrivial, and allows for various distinct solutions. The most common way of continuing the solution is to require that the energy is non-increasing, which naturally leads to the following two notions of solutions: Dissipative solutions for which the energy is decreasing in time, see \cite{BreHua16,ZhaZhen:03,ZhaZhen:05a,ZhaZhen:05}, and conservative solutions for which the energy is constant in time. In the latter case a semigroup of solutions has been constructed in \cite{BreZhe:06,HolRay:11}. 

We are interested in traveling wave solutions of \eqref{eq:nvw} with wave speed $s\in\mathbb{R}$, i.e., solutions of the form $u(t,x)=w(x-st)$ for some bounded and continuous function $w$. 

A bounded traveling wave was constructed in \cite{GlaHunZh:96}, corresponding to the function $c$ given in \eqref{eq:c}. The constructed wave is a weak solution, which is continuous and piecewise smooth. In particular, the smooth parts are monotone and at their endpoints cusp singularities might turn up. By the latter we mean points where the derivative is unbounded while the solution itself is bounded and continuous.

In this paper we consider local, classical traveling wave solutions of \eqref{eq:nvw}, i.e., solutions of the form $u(t,x)=w(x-st)$, where $w\in  C^2(I)$ for some interval $I$ and solves \eqref{eq:psiDer}, and study whether these can be glued together to produce globally bounded traveling waves. The approach we use is similar to the derivation of the Rankine--Hugoniot condition for hyperbolic conservation laws, see e.g. \cite{HolRis} and hence requires a minimal positive distance between any two gluing points.

We assume that the function $c$ belongs to $C^{2}(\mathbb{R})$ and that there exists $0<\alpha<\beta<\infty$, such that  
\begin{equation}
\label{eq:cass}
\alpha=\min_{u\in\mathbb{R}}c(u) \quad \text{and} \quad \beta=\max_{u\in\mathbb{R}}c(u).
\end{equation}
Moreover, we assume that
\begin{equation}
\label{eq:cderassumption}
\max_{u\in \mathbb{R}}|c'(u)|\leq K_{1} \quad \text{and} \quad \max_{u\in \mathbb{R}}|c''(u)|\leq K_{2}
\end{equation}
for positive constants $K_{1}$ and $K_{2}$.

The following theorem is our main result, and will be proved in the next section.

\begin{theorem}
\label{thm:nvw}
Let $c\in C^{2}(\mathbb{R})$ such that $\alpha$ and $\beta$ defined in \eqref{eq:cass} satisfy $0<\alpha<\beta<\infty$. Consider the continuous function $w:\mathbb{R}\mapsto\mathbb{R}$ composed of local, classical traveling wave solutions of \eqref{eq:nvw} with wave speed $s\in\mathbb{R}$. If $w$ is a global traveling wave to \eqref{eq:nvw}, then the following holds: 

If $|s|\notin[\alpha,\beta]$, then $w$ is a monotone, classical solution, which is globally unbounded. 

If $|s|\in(\alpha,\beta)$, two local, classical traveling wave solutions can only be glued together at points $\xi$ such that $\vert s\vert = c(w(\xi))$ and we have the following three possibilities:

1. If for some $\xi$, $|s|\neq c(w(\xi))$ and $c$ has a local maximum or minimum at $w(\xi)$, then the wave $w$ is a monotone, classical solution near $\xi$, which has an inflection point at $\xi$.  

2. If for some $\xi$, $|s|=c(w(\xi))$ and $c'(w(\xi))\neq 0$, then the wave $w$ is either constant or has a singularity at $\xi$, meaning that the derivative is unbounded at $\xi$ while $w$ is continuous. Near the singularity, the wave is a monotone, classical solution on both sides of $\xi$. The following scenarios are possible:

i) The derivative has the same sign (nonzero) on both sides of $\xi$, and the wave has an inflection point at $\xi$.

ii) The derivative has opposite sign (nonzero) on each side of $\xi$. Then, the wave is either convex or concave on both sides, and the singularity is a cusp. 

iii) The wave can be constant on one side of the singularity and strictly monotone on the other side.

3. If for some $\xi$, $|s|=c(w(\xi))$ and $c'(w(\xi))=0$, then the wave $w$ is constant.

For $|s|\in[\alpha,\beta]$, a weak bounded traveling wave solution of \eqref{eq:nvw} can be constructed.
\end{theorem}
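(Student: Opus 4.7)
The plan is to reduce the PDE \eqref{eq:nvw} to an ODE via the ansatz $u(t,x)=w(\xi)$ with $\xi=x-st$, which yields
\begin{equation*}
(s^2-c^2(w))\,w'' = c(w)c'(w)(w')^2,
\end{equation*}
and then to exploit the first integral $(s^2-c^2(w))(w')^2=C$ obtained by direct differentiation and substitution of the ODE. This conservation law is the backbone of the whole case analysis, since on any interval where $s^2\ne c^2(w)$ one has $(w')^2=C/(s^2-c^2(w))$ and the sign and vanishing of the right hand side can be read off directly from $c$ and $s$.

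For $|s|\notin[\alpha,\beta]$ the factor $s^2-c^2(w)$ has definite sign and is bounded away from zero uniformly in $w$ by a constant depending only on $|s|,\alpha,\beta$. Hence $C\ne 0$ forces $|w'|$ to be bounded below by a positive constant, so $w$ is strictly monotone, $C^2$, and globally unbounded, while $C=0$ yields only constants. This settles the first claim of the theorem.

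For $|s|\in(\alpha,\beta)$ I would first carry out a Rankine--Hugoniot-type computation: test \eqref{eq:nvw} against a compactly supported smooth function straddling a candidate gluing point and equate the resulting boundary contributions. This forces admissible gluing points to satisfy $|s|=c(w(\xi))$. At such a gluing point with $c'(w(\xi))\ne 0$, the linearization $s^2-c^2(w)\approx -2c(w(\xi))c'(w(\xi))(w-w(\xi))$ substituted into the first integral reveals cusp-type behavior, and separating $C=0$ from $C\ne 0$ on each side of $\xi$ while tracking the sign of $w'$ produces exactly the three subcases (i)--(iii) of Case 2. When also $c'(w(\xi))=0$, the function $s^2-c^2(w)$ has a zero of order at least two in $w$, so the first integral gives $|w-w(\xi)|\,|w'|\sim \text{const}$, an energy-nonintegrable singularity incompatible with the weak formulation unless $C=0$ on both sides, which forces $w$ to be constant (Case 3). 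Finally, at a smooth point where $|s|\ne c(w(\xi))$ but $c'(w(\xi))=0$, the ODE directly gives $w''(\xi)=0$, and $c''(w(\xi))\ne 0$ at a nondegenerate extremum produces the inflection point of Case 1.

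For the last statement I would construct bounded weak waves explicitly. Given $|s|\in(\alpha,\beta)$ and $C>0$, choose $w_0$ with $|s|=c(w_0)$ and $c'(w_0)\ne 0$ and integrate $(w')^2=C/(s^2-c^2(w))$ on the maximal interval where the right hand side is nonnegative; this produces a monotone classical branch reaching $w_0$ with unbounded derivative at some finite $\xi_0$. Reflecting the branch across $\xi_0$ and concatenating with constant segments at the turning values yields a globally bounded weak traveling wave, the weak form at each junction reducing to the Rankine--Hugoniot condition already verified. The boundary cases $|s|\in\{\alpha,\beta\}$ are then handled by constant waves at the global extrema of $c$. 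I expect the main obstacle to be the sign-tracking near cusps---distinguishing when $w$ turns around versus passes through with an inflection---together with the distributional verification that no spurious Dirac masses appear at the gluing points when the concatenated wave is tested against the weak formulation of \eqref{eq:nvw}.
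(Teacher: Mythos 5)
Your proposal follows essentially the same route as the paper: reduction to the ODE \eqref{eq:TWnvw}, the first integral \eqref{eq:NVWClassicalWave}, a Rankine--Hugoniot-type jump condition obtained by testing the weak form across the gluing curve (the paper's Lemma \ref{lem:nvw}, where the possibly unbounded flux $(c^2(w)-s^2)w_\xi$ is rewritten via the first integral as $\pm\sqrt{|k|}\sqrt{|c^2(w)-s^2|}$), the order-of-vanishing/integrability argument distinguishing $c'(w(\xi))\neq 0$ from $c'(w(\xi))=0$, and the concatenation construction. The argument is correct and matches the paper's proof in all essentials.
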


\begin{figure}
	\centerline{\hbox{\includegraphics[width=8cm]{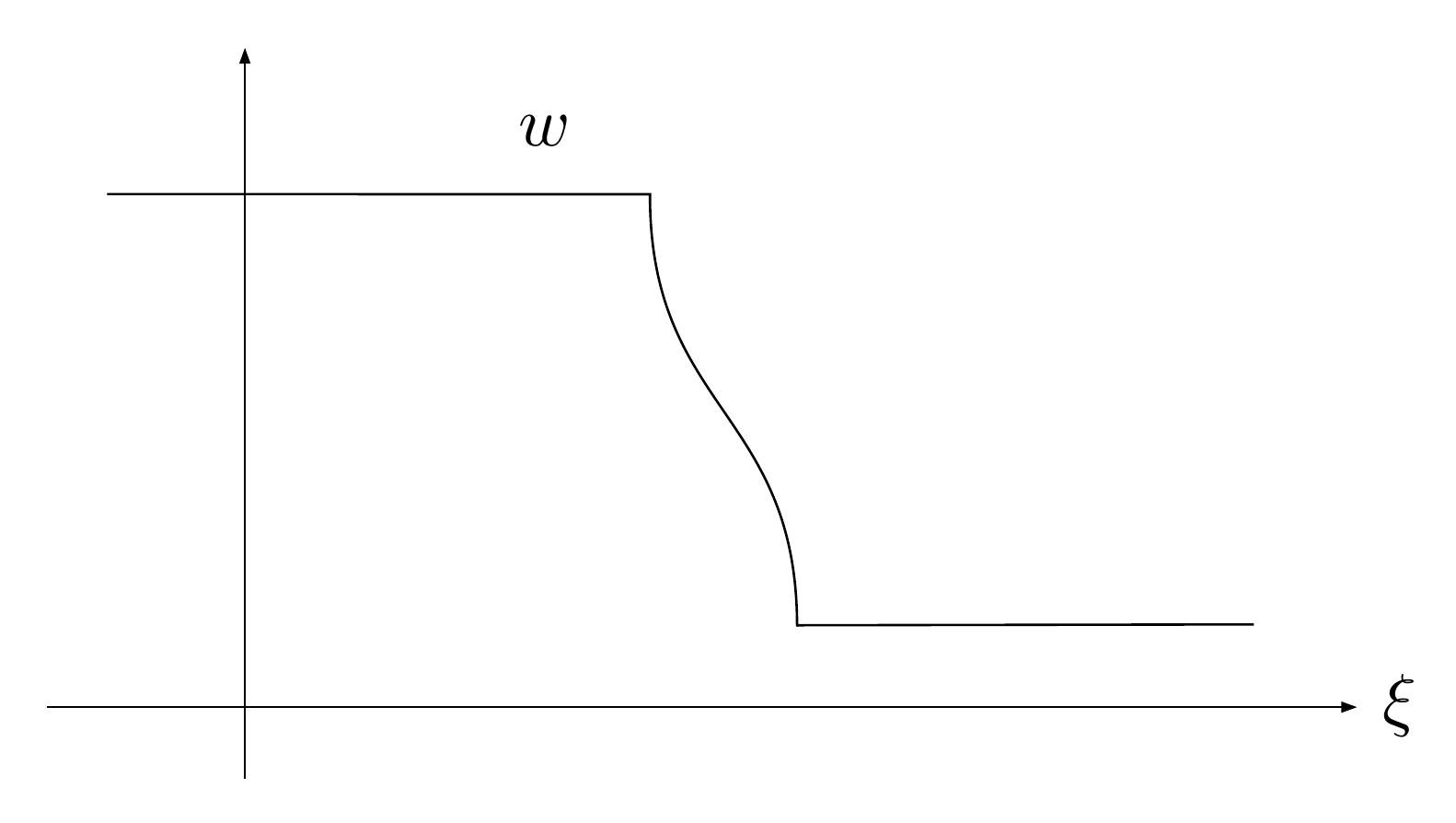}}}
	\caption{A traveling wave solution $w(\xi)$ consisting of two constant values joined together by a strictly decreasing part.}
	\label{fig:FigSW.pdf}
\end{figure}

We observe that Case 2 of Theorem \ref{thm:nvw} allows for globally bounded waves $w$. Excluding the trivial case of $w$ constant on the whole real line, we then see that the wave consists of increasing, constant, and decreasing parts, and that it has at least two singularities. The simplest nontrivial traveling wave consists of two constant values joined together by a monotone segment, which has two singularities, see Figure \ref{fig:FigSW.pdf}. This requires that $c(u)$ satisfies $c^2(u)=s^2$ for at least two values of $u$ and is illustrated in the following example.

Let $s\in \mathbb{R}$ such that $s^2>1$. Consider the periodic function 
\begin{equation}
c(u)=\sqrt{\sin(u)+s^2},
\end{equation}
which belongs to $C^2(\mathbb{R})$ and satisfies 
\begin{equation*}
0<\sqrt{s^2-1}\leq c(u)\leq \sqrt{s^2+1} \quad \text{ for all }u\in \mathbb{R}.
\end{equation*}
According to Theorem~\ref{thm:nvw}, possible gluing points can be identified by finding all points such that $c^2(u)=s^2$, which holds if and only if $u=n\pi$ ($n\in \mathbb{Z}$). Direct calculations yield that 
\begin{equation*}
c'(u)=\frac{\cos(u)}{2c(u)}=\pm\frac1{2\vert s\vert}\not=0 \quad \text{ for all } \quad u=n\pi \quad (n\in \mathbb{Z}).
\end{equation*}
Thus Theorem~\ref{thm:nvw}, Case 2 states that it is possible to construct a global traveling wave solution $u(t,x)=w(x-st)$ of the form 
\begin{equation}
w(\xi)=\begin{cases} \pi, & \xi\leq -\bar\xi,\\
                                 \tilde w(\xi), & -\bar \xi\leq \xi\leq \bar\xi,\\
                                 0, & \bar \xi<\xi,
           \end{cases} 
\end{equation}
if there exists a local, classical traveling wave solution $\tilde w$ connecting $0$ and $\pi$. That such a function $\tilde w$ exists will be shown next. Let $k\in \mathbb{R}\backslash \{0\}$ and assume that $\tilde w(0)=\frac{\pi}2$, then, cf. \eqref{eq:psiDer}, $\tilde w$ must be a local solution to 
\begin{equation}\label{ex:tw}
\tilde w_\xi(\xi)=-\frac{\sqrt{\vert k\vert}}{\sqrt{\vert s^2-c^2(\tilde w(\xi))\vert}}=-\frac{\sqrt{\vert k\vert}}{\sqrt{\sin(\tilde w(\xi))}}.
\end{equation}
Furthermore, the differential equation implies that $\tilde w_\xi(\xi)=\tilde w_\xi(-\xi)$ for all $\xi$ and hence $\tilde w(\xi)$ has an inflection point at $\xi=0$. This is as expected by Theorem~\ref{thm:nvw}, Case 1, since $c(u)$ attains a local maximum at $u=\frac{\pi}{2}=\tilde w(0)$. Moreover, one has 
\begin{equation}\label{tw:sy}
\tilde w(\xi)-\tilde w(0)=-(\tilde w(-\xi)-\tilde w(0)).
\end{equation}
Instead of computing the exact solution to \eqref{ex:tw}, we show that there exists $\bar \xi>0$ dependent on $\vert k\vert $ such that 
\begin{equation*}
\tilde w(-\bar \xi)=\pi \quad \text{ and }\quad \tilde w(\bar \xi)=0.
\end{equation*}
Due to \eqref{tw:sy} it suffices to show that there exists $\bar \xi>0$ such that $\tilde w(\bar\xi)=0$. From \eqref{ex:tw}, we get
\begin{equation}\label{tw:ex2}
-\int_0^{\frac{\pi}{2}} \sqrt{\sin(x)}dx=\int_0^{\bar \xi} \sqrt{\sin(\tilde w(l))}w_\xi(l)dl=-\sqrt{\vert k\vert}\bar \xi<0.
\end{equation}
Since $\sin(x)$ is positive on $[0,\pi]$, one has 
\begin{equation*}
\sin(x)\leq \sqrt{\sin(x)}\leq 1 \quad \text{ for all } x\in [0,\pi],
\end{equation*}
which implies
\begin{equation}\label{tw:ex3}
-\frac{\pi}{2}=-\int_0^{\frac{\pi}{2}} dx\leq -\int_0^{\frac{\pi}{2}}\sqrt{\sin(x)}dx\leq-\int_0^{\frac{\pi}{2}} \sin(x)dx=-1.
\end{equation}
Combining \eqref{tw:ex2} and \eqref{tw:ex3}, one ends up with
\begin{equation*}
\frac{1}{\sqrt{\vert k \vert}}\leq \bar \xi\leq \frac{\pi}{2\sqrt{\vert k\vert}},
\end{equation*}
which proves the existence of a local, classical traveling wave $\tilde w$ connection $0$ and $\pi$.

\vspace{0.2cm}
In Section~\ref{sec:CH}, we consider the Camassa--Holm (CH) equation
\begin{equation}
\label{eq:CH}
	u_{t}-u_{txx}+3uu_{x}-2u_{x}u_{xx}-uu_{xxx}=0,
\end{equation}
which was introduced in \cite{CamHol}. The CH equation has been studied intensively within the last three decades. There are too many interesting results to mention here, and we refer to \cite{BreCons:07-1,BreCons:07-2,CamHol,ConEs98-1,ConEs98-2,GruHolRay:14,HolRay:07, HolRay:09} and the citations therein for more information. We point out that the peakon solution, which was already observed in \cite{CamHol}, is a weak traveling wave solution of \eqref{eq:CH}. This is in contrast to the NVW equation, where there are no known non-constant explicit weak solutions. Moreover, like the NVW equation, singularity formation in the derivatives of solutions to \eqref{eq:CH} may occur, see \cite{ConEs98-2}.

In \cite{Len}, Lenells derives criteria for gluing together local, classical traveling wave solutions of \eqref{eq:CH} to obtain global, bounded traveling waves, see also \cite{Len2}. By doing so, all weak, bounded traveling wave solutions of the CH equation are classified. Some of these traveling waves have discontinuous derivatives, such as peakons, cuspons, stumpons, and composite waves. These waves have, except for the peakons, singularities in their derivatives. 

We apply the aforementioned method to the CH equation and reproduce the criteria derived by Lenells.

\section{Proof of Theorem~\ref{thm:nvw}}

Let $\xi=x-st$ and denote the derivative of $w$ with respect to $\xi$ by $w_{\xi}$. Assume for the moment that $w\in C^{2}(\mathbb{R})$. Inserting the derivatives of $w$ into \eqref{eq:nvw} yields
\begin{equation}
\label{eq:TWnvw}
	\big[s^{2}-c^{2}(w)\big]w_{\xi\xi}-c(w)c'(w)w_{\xi}^{2}=0.
\end{equation}
Observe that \eqref{eq:TWnvw} is satisfied at all points $\xi$ such that $|s|=c(w(\xi))$, at which either $w_{\xi}(\xi)=0$, leading to constant solutions, or $c'(w(\xi))=0$. We multiply \eqref{eq:TWnvw} by $2w_{\xi}$ and get
\begin{equation*}
	\frac{d}{d\xi}\Big(w_{\xi}^{2}\big[s^{2}-c^{2}(w)\big]\Big)=0.
\end{equation*}
Integration leads to
\begin{equation}
\label{eq:NVWClassicalWave}
	w_{\xi}^{2}\big[s^{2}-c^{2}(w)\big]=k
\end{equation}
for some integration constant $k$. Observe that we derived \eqref{eq:NVWClassicalWave} assuming that $w\in C^{2}(\mathbb{R})$, but for \eqref{eq:NVWClassicalWave} to make sense it suffices that $w$ is in $C^{1}(\mathbb{R})$. 

We say that $u$ is a local, classical traveling wave solution of \eqref{eq:nvw} if $u(t,x)=w(x-st)$, for some $w$ in $C^{2}(I)$, where $I$ denotes some interval, and satisfies \eqref{eq:TWnvw}. 

If $|s|\notin[\alpha,\beta]$, then $|s|\neq c(w(\xi))$ for all $\xi$ and we have
\begin{equation}
\label{eq:psiDer}
	w_{\xi}(\xi)=\pm\frac{\sqrt{|k|}}{\sqrt{|s^{2}-c^{2}(w(\xi))|}}.
\end{equation}
The right-hand side of \eqref{eq:psiDer} is Lipschitz continuous with respect to $w$ and there exists a unique local solution $w$ which is continuously differentiable and monotone. For these solutions we see from \eqref{eq:psiDer} that the derivatives are bounded. In particular, the solutions are bounded locally, but not globally.

In the case $|s|\in[\alpha,\beta]$, Lipschitz continuity fails, and the standard existence and uniqueness result for ordinary differential equations does not apply. In this case we show, under some specific conditions, that if there is a local solution, it is H\"older continuous. Let $w$ be a bounded and strictly monotone solution of \eqref{eq:psiDer} on an interval $[\xi_{0},\xi_{1}]$ such that  $c(w(\xi_{0}))\neq |s|$, $c(w(\xi_{1}))\neq |s|$, and $|s|=c(w(\eta))$ for some $\eta\in(\xi_{0},\xi_{1})$. Then, by assumption, the derivative $w_{\xi}$ is bounded at $\xi_{0}$ and $\xi_{1}$. We claim that the solution is H\"older continuous on $[\xi_{0},\xi_{1}]$ if $c'(w(\eta))\neq 0$. From \eqref{eq:psiDer} and a change of variables we get
\begin{equation}
\label{eq:waveL2norm}
	\int_{\xi_{0}}^{\xi_{1}}w_{\xi}^{2}(\xi)\,d\xi=\sqrt{|k|}\bigg|\int_{w(\xi_{0})}^{w(\xi_{1})}\frac{1}{\sqrt{|s^{2}-c^{2}(z)|}}\,dz\bigg|,
\end{equation}
and $c(w(\eta))=\vert s\vert$ yields
\begin{equation*}
	\int_{\xi_{0}}^{\xi_{1}}w_{\xi}^{2}(\xi)\,d\xi=\sqrt{|k|}\bigg|\int_{w(\xi_{0})}^{w(\xi_{1})}\frac{1}{\sqrt{|c^{2}(w(\eta))-c^{2}(z)|}}\,dz\bigg|.
\end{equation*}
The integrand is finite everywhere except at $z=w(\eta)$. For $z$ near $w(\eta)$ we replace $c(z)$ by its Taylor approximation and get
\begin{align*}
	|c^{2}(w(\eta))-c^{2}(z)|&=|c(w(\eta))+c(z)|\cdot|c(w(\eta))-c(z)|\\
	&\geq 2\alpha|c(w(\eta))-c(z)|=2\alpha\big|c'(w(\eta))(z-w(\eta))+\frac{1}{2}c''(p)(z-w(\eta))^{2}\big|,
\end{align*}
for some $p$ between $z$ and $w(\eta)$. The expression
\begin{equation*}
	\big|c'(w(\eta))(z-w(\eta))+\frac{1}{2}c''(p)(z-w(\eta))^{2}\big|^{-\frac{1}{2}}
\end{equation*}
is integrable if $c'(w(\eta))\neq 0$ and not integrable if $c'(w(\eta))=0$. Therefore, the integral $\int_{\xi_{0}}^{\xi_{1}}w_{\xi}^{2}(\xi)\,d\xi$
is finite if for all $\xi\in(\xi_{0},\xi_{1})$ such that $|s|=c(w(\xi))$, we have $c'(w(\xi))\neq 0$. In particular, by the Cauchy--Schwarz inequality we have
\begin{equation*}
	|w(\xi_{1})-w(\xi_{0})|\leq ||w_{\xi}||_{L^{2}([\xi_{0},\xi_{1}])}|\xi_{1}-\xi_{0}|^{\frac{1}{2}}
\end{equation*}
and $w$ is H\"older continuous with exponent $\frac{1}{2}$. This continuity will be important later in the text when we discuss which traveling waves can be glued together.

We illustrate the above result with an example. 
\begin{example}
Let $A=\frac{\beta-\alpha}{\pi}$ and $B=\alpha+\beta$, where $0<\alpha<\beta<\infty$. Consider the function
\begin{equation}\label{specificc}
	c(u)=A\arctan(u)+\frac{B}{2},
\end{equation}
which is strictly increasing and satisfies $\displaystyle\lim_{u\rightarrow-\infty}c(u)=\alpha$ and  $\displaystyle\lim_{u\rightarrow+\infty}c(u)=\beta$. Consider the wave speed $s=\frac{B}{2}$, where we have $\alpha<s<\beta$. 
Let $f(u)=s^{2}-c^{2}(u)$. We have $f(u)=-A\arctan(u)(A\arctan(u)+B)$. We compute the derivative and get $f'(u)=-\frac{A}{1+u^{2}}(2A\arctan(u)+B)$,
and since $2A\arctan(u)+B\geq 2\alpha>0$ for all $u$ we have $f'(u)<0$. The only point satisfying $f(u)=0$ is $u=0$. In other words, $s=c(0)$. 

Denote by $w$ the strictly increasing solution to \eqref{eq:psiDer} and \eqref{specificc}. We assume that $w(\xi_{0})<0<w(\xi_{1})$, so that $c(w(\xi_{0}))\neq s$ and $c(w(\xi_{1}))\neq s$, which implies that the derivative $w_{\xi}$ is bounded at $\xi_{0}$ and $\xi_{1}$. From \eqref{eq:waveL2norm} we get
\begin{aalign}
\label{eq:excInt}
	\int_{\xi_{0}}^{\xi_{1}}w_{\xi}^{2}(\xi)\,d\xi&=\sqrt{|k|}\int_{w(\xi_{0})}^{0}\frac{1}{\sqrt{-A\arctan(z)(A\arctan(z)+B)}}\,dz\\
	&\quad +\sqrt{|k|}\int_{0}^{w(\xi_{1})}\frac{1}{\sqrt{A\arctan(z)(A\arctan(z)+B)}}\,dz.
\end{aalign}
By a change of variables we have 
\begin{align*}
	&\int_{0}^{w(\xi_{1})}\frac{1}{\sqrt{A\arctan(z)(A\arctan(z)+B)}}\,dz \leq \int_{0}^{w(\xi_{1})}\frac{1}{\sqrt{AB\arctan(z)}}\,dz\\
	&\leq (1+w^{2}(\xi_{1})) \int_{0}^{w(\xi_{1})}\frac{1}{\sqrt{AB\arctan(z)}}\frac{1}{1+z^{2}}\,dz=2(1+w^{2}(\xi_{1}))\frac{1}{\sqrt{AB}}\sqrt{\arctan(w(\xi_{1}))}
\end{align*}
and since $w(\xi_{1})$ is finite, the integral converges. Note that this only holds locally. The first integral in \eqref{eq:excInt} can be treated in the same way, showing that $w_{\xi}\in L^{2}([\xi_{0},\xi_{1}])$ and we conclude that $w$ is H\"older continuous on $[\xi_{0},\xi_{1}]$.
\end{example}

Let us focus on weak traveling wave solutions. To derive the weak form of \eqref{eq:nvw} we first assume that we have a bounded solution $u\in C^{2}((0,\infty)\times\mathbb{R})$. We multiply \eqref{eq:nvw} by a smooth test function $\phi\in C_{c}^{\infty}((0,\infty)\times\mathbb{R})$ and integrate by parts, which yields
\begin{equation}
	\label{eq:weakForm}
	\int_{0}^{\infty}\int_{-\infty}^{\infty}\big[-u_{t}\phi_{t}+c(u)c'(u)u_{x}^{2}\phi+c^{2}(u)u_{x}\phi_{x}\big]\,dx\,dt=0.
\end{equation}
We say that a function $u$ satisfying $u(t,\cdot)\in\Linf(\mathbb{R})$ and $u_{t}(t,\cdot),u_{x}(t,\cdot)\in L^{2}(\mathbb{R})$ for all $t\geq 0$ is a weak solution of \eqref{eq:nvw} if \eqref{eq:weakForm} holds for all test functions $\phi$ in $C_{c}^{\infty}((0,\infty)\times\mathbb{R})$. We observe that if there exists a piecewise smooth traveling wave solution satisfying these conditions, it is H\"older continuous with exponent $\frac{1}{2}$. 

In the case of a traveling wave $u(t,x)=w(x-st)$, \eqref{eq:weakForm} reads
\begin{equation*}
	\int_{0}^{\infty}\int_{-\infty}^{\infty}\big[sw_{\xi}\phi_{t}+c(w)c'(w)w_{\xi}^{2}\phi+c^{2}(w)w_{\xi}\phi_{x}\big]\,dx\,dt=0.
\end{equation*}  

Now we want to glue together two local, classical solutions to produce a weak traveling wave solution. At the points where we glue them together the derivatives may not exist. Thus, we consider the following situation: assume that $u_{t}$ and $u_{x}$ have discontinuities that move along a smooth curve $\Gamma: x=\gamma(t)$, where we assume that $\gamma$ is a smooth and strictly increasing function. Moreover, we assume that there exists a sufficiently small neighborhood of $\gamma(t)$ such that $u$ is a classical solution of \eqref{eq:nvw} on each side of $\gamma(t)$.

\begin{lemma}
\label{lem:nvw}
Given a curve $\Gamma:x=\gamma(t)=st+\gamma_0$, where $\gamma_0$ is a constant, denote by $D$ a neighborhood of $(\bar t, \gamma(\bar t))\in \Gamma$. Furthermore, let $D=D_1\cup \Gamma\vert_D\cup D_2$, where $D_1$ and $D_2$ are the parts of $D$ to the left and to the right of $\Gamma$, respectively, see Figure \ref{fig:FigD}. Consider two local, classical traveling wave solutions $u_1$ and $u_2$  of \eqref{eq:nvw} in $D_1$ and $D_2$, respectively. Assume that we glue these waves at $\Gamma$ to obtain a continuous traveling wave $u(t,x)=w(x-st)$ in $D$, which satisfies 
\begin{equation*}
	\iint_{D}\big[sw_{\xi}\phi_{t}+c(w)c'(w)w_{\xi}^{2}\phi+c^{2}(w)w_{\xi}\phi_{x}\big]\,dx\,dt=0 \quad \text{ for any } \phi\in C_c^\infty(D).
\end{equation*}

If $|s|\notin[\alpha,\beta]$, then 
\begin{subequations}
\begin{equation}
\label{eq:Res1}
	w_{\xi}(\gamma_{0}-)=w_{\xi}(\gamma_{0}+).
\end{equation}

If $|s|\in(\alpha,\beta)$ and $c'(w(\xi))\neq 0$ for all $\xi=x-st$, such that $(t,x)\in D$ and $|s|=c(w(\xi))$, then 
\begin{aalign}
\label{eq:Res2}
	&\Big[\sqrt{|k_{1}|}\,\emph{sign}\big(\big((c^{2}(w(\gamma_{0}))-s^{2})w_{\xi}(\gamma_{0})\big)-\big)\\
	&\quad-\sqrt{|k_{2}|}\,\emph{sign}\big(\big((c^{2}(w(\gamma_{0}))-s^{2})w_{\xi}(\gamma_{0})\big)+\big)\Big]\sqrt{|c^{2}(w(\gamma_{0}))-s^{2}|}=0,
\end{aalign}
\end{subequations}
where $k_{1}$ and $k_{2}$ denote the constants in \eqref{eq:NVWClassicalWave} corresponding to the local, classical traveling wave solutions $u_1$ and $u_2$ in $D_{1}$ and $D_{2}$, respectively.
\end{lemma}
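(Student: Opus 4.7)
The strategy is to turn the weak formulation into a pure boundary integral on $\Gamma$ by a divergence-theorem argument carried out separately on $D_1$ and $D_2$, extract a jump condition, and then rewrite it using the first integral \eqref{eq:NVWClassicalWave}. First I would verify the key algebraic identity: whenever $w\in C^{2}$ satisfies the classical traveling wave equation \eqref{eq:TWnvw}, the weak-form integrand is an exact divergence. Using $w_{\xi t}=-sw_{\xi\xi}$ and the chain rule,
\[
sw_\xi\phi_t+c(w)c'(w)w_\xi^{2}\phi+c^{2}(w)w_\xi\phi_x=\partial_t\!\bigl(sw_\xi\phi\bigr)+\partial_x\!\bigl(c^{2}(w)w_\xi\phi\bigr)+\bigl([s^{2}-c^{2}(w)]w_{\xi\xi}-c(w)c'(w)w_\xi^{2}\bigr)\phi,
\]
and the last summand vanishes by \eqref{eq:TWnvw}. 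Thus in the interior of each $D_i$ the integrand reduces to $\partial_t(sw_\xi\phi)+\partial_x(c^{2}(w)w_\xi\phi)$.

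Next I would apply the divergence theorem on $D_1$ and $D_2$. Since $\phi$ is compactly supported in $D$, the only surviving boundary is $\Gamma\cap D$. Parametrizing $\Gamma$ by $t\mapsto(t,st+\gamma_0)$ with tangent $(1,s)$, the outward unit normal to $D_1$ is $(n_t,n_x)=(-s,1)/\sqrt{1+s^{2}}$ with $d\sigma=\sqrt{1+s^{2}}\,dt$, so the contribution from $D_1$ reduces to $\int_{\Gamma}[c^{2}(w(\gamma_0))-s^{2}]w_\xi(\gamma_0-)\phi\,dt$, while the contribution from $D_2$ is the analogous expression with $\gamma_0-$ replaced by $\gamma_0+$ and the opposite sign. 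Summing and using that $\phi$ is arbitrary in $C_c^{\infty}(D)$ yields the single jump condition
\[
\bigl[c^{2}(w(\gamma_0))-s^{2}\bigr]\bigl(w_\xi(\gamma_0-)-w_\xi(\gamma_0+)\bigr)=0,
\]
i.e.\ $(c^{2}(w)-s^{2})w_\xi$ is continuous across $\Gamma$.

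I would then split into the two stated cases. If $|s|\notin[\alpha,\beta]$, then $c^{2}(w(\gamma_0))-s^{2}$ is nonzero, so dividing yields \eqref{eq:Res1}. If $|s|\in(\alpha,\beta)$ with $c'(w(\eta))\neq 0$ whenever $c(w(\eta))=|s|$, applying the first integral \eqref{eq:NVWClassicalWave} separately in $D_1$ and $D_2$ gives $[(c^{2}(w)-s^{2})w_\xi]^{2}=|c^{2}(w)-s^{2}|\cdot|k_i|$, and hence
\[
\bigl(c^{2}(w(\gamma_0))-s^{2}\bigr)w_\xi(\gamma_0\mp)=\operatorname{sign}\!\bigl(\bigl((c^{2}(w(\gamma_0))-s^{2})w_\xi\bigr)(\gamma_0\mp)\bigr)\sqrt{|k_i|}\sqrt{|c^{2}(w(\gamma_0))-s^{2}|};
\]
inserting these expressions into the jump condition reproduces exactly \eqref{eq:Res2}.

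The principal technical difficulty is justifying the integration by parts up to $\Gamma$ when $w_\xi(\gamma_0\mp)$ may blow up, which happens precisely when $c(w(\gamma_0))=|s|$. I would handle this by approximation: replace $D_i$ by a slightly smaller subdomain $D_i^{\varepsilon}$ retreating from $\Gamma$, apply the classical divergence theorem on $D_i^{\varepsilon}$, and pass to the limit $\varepsilon\to 0$. The boundary integrals along the retreating curves converge because the \emph{product} $(c^{2}(w)-s^{2})w_\xi$ is globally bounded, with $|(c^{2}(w)-s^{2})w_\xi|=\sqrt{|k_i|\,|c^{2}(w)-s^{2}|}\le\sqrt{|k_i|(s^{2}+\beta^{2})}$, and admits one-sided limits at $\gamma_0$ thanks to the Hölder-continuity estimate established just before the lemma, which is precisely where the hypothesis $c'(w(\eta))\neq 0$ at the singular values becomes essential.
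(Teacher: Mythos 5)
Your proposal is correct and follows essentially the same route as the paper: rewriting the integrand as an exact divergence via \eqref{eq:TWnvw}, applying Green's theorem on the $\varepsilon$-retracted domains $D_i^{\varepsilon}$ and passing to the limit to obtain the jump condition $\bigl[c^{2}(w(\gamma_0))-s^{2}\bigr]\bigl(w_\xi(\gamma_0-)-w_\xi(\gamma_0+)\bigr)=0$, and then substituting the first integral \eqref{eq:NVWClassicalWave} to replace $(c^{2}(w)-s^{2})w_\xi$ by $\operatorname{sign}(\cdot)\sqrt{|k_i|}\sqrt{|c^{2}(w)-s^{2}|}$ in the unbounded-derivative case. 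The divergence identity, the normal/arc-length computation, and the handling of the singular limit all match the paper's argument.
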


\begin{proof}
Let
\begin{equation*}
	I=\{t\in[0,\infty) \ | \ (t,\gamma(t))\in D\}.
\end{equation*}
\begin{figure}
	\centerline{\hbox{\includegraphics[width=8cm]{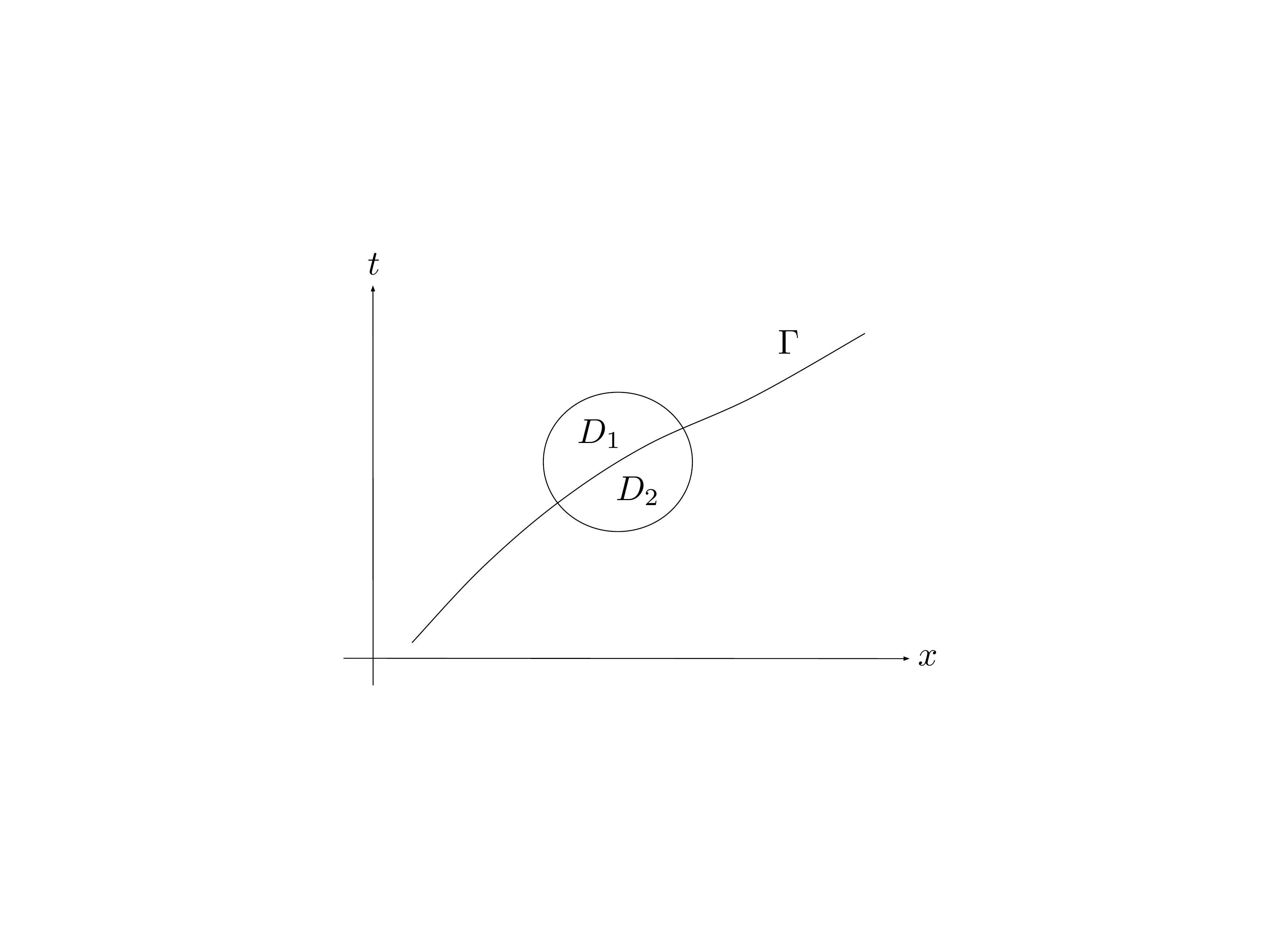}}}
	\caption{Some strictly increasing curve $\Gamma: x=\gamma(t)$ and the neighborhoods $D_{1}$ and $D_{2}$.}
	\label{fig:FigD}
\end{figure}
For any $\varepsilon>0$ consider
\begin{equation*}
	D_{i}^{\varepsilon}=\{(t,x)\in D_{i} \ | \ \text{dist}((t,x),\Gamma)>\varepsilon\}
\end{equation*}
for $i=1,2$. We have
\begin{aalign}
	\label{eq:weakFormD}
	&\iint_{D}\big[
	-u_{t}\phi_{t}+c(u)c'(u)u_{x}^{2}\phi+c^{2}(u)u_{x}\phi_{x}\big]\,dx\,dt\\
	&=\lim_{\varepsilon\rightarrow 0}\bigg(\sum_{i=1}^{2}\iint_{D_{i}^{\varepsilon}}\big[
	-u_{t}\phi_{t}+c(u)c'(u)u_{x}^{2}\phi+c^{2}(u)u_{x}\phi_{x}\big]\,dx\,dt\bigg). 
\end{aalign}
Since $u$ is a classical solution in $D_{1}^{\varepsilon}$, we have
\begin{align*}
	&\iint_{D_{1}^{\varepsilon}}\big[
	-u_{t}\phi_{t}+c(u)c'(u)u_{x}^{2}\phi+c^{2}(u)u_{x}\phi_{x}\big]\,dx\,dt=\iint_{D_{1}^{\varepsilon}}\big[(c^{2}(u)u_{x}\phi)_{x}-(u_{t}\phi)_{t}\big]\,dx\,dt.
\end{align*} 
By Green's theorem we get
\begin{equation*}
	\iint_{D_{1}^{\varepsilon}}\big[(c^{2}(u)u_{x}\phi)_{x}-(u_{t}\phi)_{t}\big]\,dx\,dt
	=\int_{\partial D_{1}^{\varepsilon}}\big[u_{t}\phi\,dx+c^{2}(u)u_{x}\phi\,dt\big]=\int_{\Gamma_{1}^{\varepsilon}}\big[u_{t}\phi\,dx+c^{2}(u)u_{x}\phi\,dt\big],
\end{equation*}
where the last equality follows since $\phi$ is zero everywhere on $\partial D_{1}^{\varepsilon}$ except on $\Gamma_{1}^{\varepsilon}$, where $\Gamma_{1}^{\varepsilon}$ is the part of $\partial D_{1}^{\varepsilon}$ which does not coincide with the boundary of $ D$. We can parametrize the curve by $\Gamma_{1}^{\varepsilon}: x=\gamma_{1}^{\varepsilon}(t)$ for $t\in I_{1}^{\varepsilon}$, where $\gamma_{1}^{\varepsilon}$ is a smooth and strictly increasing function and $I_{1}^{\varepsilon}$ is an interval. Now we have
\begin{aalign}
\label{eq:nvwD1epsilon}
	&\iint_{D_{1}^{\varepsilon}}\big[
	-u_{t}\phi_{t}+c(u)c'(u)u_{x}^{2}\phi+c^{2}(u)u_{x}\phi_{x}\big]\,dx\,dt\\
	&=\int_{I_{1}^{\varepsilon}}\big[(u_{t}\phi)(t,\gamma_{1}^{\varepsilon}(t))(\gamma_{1}^{\varepsilon})'(t)+(c^{2}(u)u_{x}\phi)(t,\gamma_{1}^{\varepsilon}(t))\big]\,dt.
\end{aalign}
By assumption $u(t,x)=w(x-st)$ is a classical traveling wave solution in $D_{1}^{\varepsilon}$. It follows that $\gamma_{1}^{\varepsilon}(t)=\gamma(t)-\varepsilon\sqrt{s^{2}+1}$,
\begin{equation*}
	I_{1}^{\varepsilon}=\{t\in[0,\infty) \ | \ (t,\gamma_{1}^{\varepsilon}(t))\in D_{1}\}, \quad \text{and} \quad
	\Gamma_{1}^{\varepsilon}=\{(t,\gamma_{1}^{\varepsilon}(t)) \ | \ t\in I_{1}^{\varepsilon}\}.
\end{equation*}
From \eqref{eq:nvwD1epsilon} we get
\begin{aalign}
\label{eq:nvwD1epsilon2}
	&\iint_{D_{1}^{\varepsilon}}\big[
	sw_{\xi}\phi_{t}+c(w)c'(w)w_{\xi}^{2}\phi+c^{2}(w)w_{\xi}\phi_{x}\big]\,dx\,dt\\
	&=\int_{I_{1}^{\varepsilon}}\big[c^{2}(w(\gamma_{1}^{\varepsilon}(t)-st))-s^{2}\big]w_{\xi}(\gamma_{1}^{\varepsilon}(t)-st)\phi(t,\gamma_{1}^{\varepsilon}(t))\,dt.
\end{aalign}

By similar computations, as above, for $D_2^\varepsilon$ we get
\begin{equation*}
	\iint_{D_{2}^{\varepsilon}}\big[
	-u_{t}\phi_{t}+c(u)c'(u)u_{x}^{2}\phi+c^{2}(u)u_{x}\phi_{x}\big]\,dx\,dt
	=\int_{\Gamma_{2}^{\varepsilon}}\big[u_{t}\phi\,dx+c^{2}(u)u_{x}\phi\,dt\big],
\end{equation*}
where $\Gamma_{2}^{\varepsilon}$ is the part of $\partial D_{2}^{\varepsilon}$ which does not coincide with the boundary $\partial D$. We parametrize the curve by $\Gamma_{2}^{\varepsilon}: x=\gamma_{2}^{\varepsilon}(t)$ for $t\in I_{2}^{\varepsilon}$, where $\gamma_{2}^{\varepsilon}$ is a smooth and strictly increasing function and $I_{2}^{\varepsilon}$ is an interval. We obtain
\begin{aalign}
\label{eq:nvwD2epsilon}
	&\iint_{D_{2}^{\varepsilon}}\big[
	-u_{t}\phi_{t}+c(u)c'(u)u_{x}^{2}\phi+c^{2}(u)u_{x}\phi_{x}\big]\,dx\,dt\\
	&=-\int_{I_{2}^{\varepsilon}}\big[(u_{t}\phi)(t,\gamma_{2}^{\varepsilon}(t))(\gamma_{2}^{\varepsilon})'(t)+(c^{2}(u)u_{x}\phi)(t,\gamma_{2}^{\varepsilon}(t))\big]\,dt
\end{aalign}
where the negative sign comes from the fact that we are integrating counterclockwise around the boundary in Green's theorem. Inserting $u(t,x)=w(x-st)$ yields
\begin{aalign}
\label{eq:nvwD2epsilon2}
	&\iint_{D_{2}^{\varepsilon}}\big[
	sw_{\xi}\phi_{t}+c(w)c'(w)w_{\xi}^{2}\phi+c^{2}(w)w_{\xi}\phi_{x}\big]\,dx\,dt\\
	&=-\int_{I_{2}^{\varepsilon}}\big[c^{2}(w(\gamma_{2}^{\varepsilon}(t)-st))-s^{2}\big]w_{\xi}(\gamma_{2}^{\varepsilon}(t)-st)\phi(t,\gamma_{2}^{\varepsilon}(t))\,dt,
\end{aalign}
where $\gamma_{2}^{\varepsilon}(t)=\gamma(t)+\varepsilon\sqrt{s^{2}+1}$,
\begin{equation*}
	I_{2}^{\varepsilon}=\{t\in[0,\infty) \ | \ (t,\gamma_{2}^{\varepsilon}(t))\in D_{2}\}, \quad \text{and} \quad
	\Gamma_{2}^{\varepsilon}=\{(t,\gamma_{2}^{\varepsilon}(t)) \ | \ t\in I_{2}^{\varepsilon}\}.
\end{equation*}

Consider $|s|\notin[\alpha,\beta]$. Then $|s|\neq c(w(\xi))$ for all $\xi$, and by \eqref{eq:NVWClassicalWave} the derivative $w_{\xi}$ is bounded at all points in $D$. From \eqref{eq:nvwD1epsilon2} and \eqref{eq:nvwD2epsilon2} we have
\begin{equation*}
	\lim_{\varepsilon\rightarrow 0}\iint_{D_{1}^{\varepsilon}}\big[
	sw_{\xi}\phi_{t}+c(w)c'(w)w_{\xi}^{2}\phi+c^{2}(w)w_{\xi}\phi_{x}\big]\,dx\,dt=\int_{I}\big[c^{2}(w(\gamma_{0}))-s^{2}\big]w_{\xi}(\gamma_{0}-)\phi(t,\gamma(t))\,dt
\end{equation*}
and 
\begin{equation*}
	\lim_{\varepsilon\rightarrow 0}\iint_{D_{2}^{\varepsilon}}\big[
	sw_{\xi}\phi_{t}+c(w)c'(w)w_{\xi}^{2}\phi+c^{2}(w)w_{\xi}\phi_{x}\big]\,dx\,dt=-\int_{I}\big[c^{2}(w(\gamma_{0}))-s^{2}\big]w_{\xi}(\gamma_{0}+)\phi(t,\gamma(t))\,dt,
\end{equation*}
respectively. Here, $w_{\xi}(\gamma_{0}-)$ and $w_{\xi}(\gamma_{0}+)$ denote the left and right limit of $w_{\xi}$ at $\gamma_{0}$, respectively. We insert these expressions in \eqref{eq:weakFormD}, and get
\begin{align*}
	&\iint_{D}\big[sw_{\xi}\phi_{t}+c(w)c'(w)w_{\xi}^{2}\phi+c^{2}(w)w_{\xi}\phi_{x}\big]\,dx\,dt\\
	&=\big[c^{2}(w(\gamma_{0}))-s^{2}\big]\big[w_{\xi}(\gamma_{0}-)-w_{\xi}(\gamma_{0}+)\big]\int_{I}\phi(t,\gamma(t))\,dt.
\end{align*}
For $w$ to be a weak solution this expression has to be zero for every test function $\phi$, and we must have
\begin{equation*}
	\big[c^{2}(w(\gamma_{0}))-s^{2}\big]\big[w_{\xi}(\gamma_{0}-)-w_{\xi}(\gamma_{0}+)\big]=0,
\end{equation*}
which implies $w_{\xi}(\gamma_{0}-)=w_{\xi}(\gamma_{0}+)$. This proves \eqref{eq:Res1}.

Now we consider $|s|\in[\alpha,\beta]$. In this case $w_{\xi}$ may be unbounded on the curve $\Gamma$, and we have to eliminate the derivatives from \eqref{eq:nvwD1epsilon2} and \eqref{eq:nvwD2epsilon2}. Recall that we only consider continuous waves. 

Since $w$ is a classical solution in $\overline{D_{1}^{\varepsilon}}$ we get from \eqref{eq:NVWClassicalWave},
\begin{equation}
\label{eq:D1eq}
	w_{\xi}^{2}(\xi)\big[s^{2}-c^{2}(w(\xi))\big]=k_{1}
\end{equation}
where $k_{1}$ is a constant. Thus, we have
\begin{equation*}
	(c^{2}(w)-s^{2})w_{\xi}=\text{sign}\big((c^{2}(w)-s^{2})w_{\xi}\big)\sqrt{|c^{2}(w)-s^{2}|}\sqrt{|k_{1}|}.
\end{equation*}
In \eqref{eq:nvwD1epsilon2} we now get
\begin{align*}
	&\iint_{D_{1}^{\varepsilon}}\big[
	sw_{\xi}\phi_{t}+c(w)c'(w)w_{\xi}^{2}\phi+c^{2}(w)w_{\xi}\phi_{x}\big]\,dx\,dt\\
	&=\int_{I_{1}^{\varepsilon}}\sqrt{|k_{1}|}\Big(\text{sign}\big((c^{2}(w)-s^{2})w_{\xi}\big)\sqrt{|c^{2}(w)-s^{2}|}\Big)(\gamma_{1}^{\varepsilon}(t)-st)\phi(t,\gamma_{1}^{\varepsilon}(t))\,dt,
\end{align*}
and we obtain
\begin{aalign}
\label{eq:limD1Epsilon}
	&\lim_{\varepsilon\rightarrow 0}\iint_{D_{1}^{\varepsilon}}\big[
	sw_{\xi}\phi_{t}+c(w)c'(w)w_{\xi}^{2}\phi+c^{2}(w)w_{\xi}\phi_{x}\big]\,dx\,dt\\
	&=\int_{I}\sqrt{|k_{1}|}\,\text{sign}\big(\big((c^{2}(w(\gamma_{0}))-s^{2})w_{\xi}(\gamma_{0})\big)-\big)\sqrt{|c^{2}(w(\gamma_{0}))-s^{2}|}\phi(t,\gamma(t))\,dt.
\end{aalign}

In a similar way, we get by using \eqref{eq:NVWClassicalWave} 
in $\overline{D_{2}^{\varepsilon}}$, 
\begin{aalign}
	\label{eq:limD2Epsilon}
	&\lim_{\varepsilon\rightarrow 0}\iint_{D_{2}^{\varepsilon}}\big[
	sw_{\xi}\phi_{t}+c(w)c'(w)w_{\xi}^{2}\phi+c^{2}(w)w_{\xi}\phi_{x}\big]\,dx\,dt\\
	&=-\int_{I}\sqrt{|k_{2}|}\,\text{sign}\big(\big((c^{2}(w(\gamma_{0}))-s^{2})w_{\xi}(\gamma_{0})\big)+\big)\sqrt{|c^{2}(w(\gamma_{0}))-s^{2}|}\phi(t,\gamma(t))\,dt,
\end{aalign}
for some constant $k_{2}$. Combining \eqref{eq:limD1Epsilon} and \eqref{eq:limD2Epsilon} in \eqref{eq:weakFormD}, we get
\begin{align*}
	&\iint_{D}\big[
	sw_{\xi}\phi_{t}+c(w)c'(w)w_{\xi}^{2}\phi+c^{2}(w)w_{\xi}\phi_{x}\big]\,dx\,dt\\
	&=\int_{I}\Big[
	\sqrt{|k_{1}|}\,\text{sign}\big(\big((c^{2}(w(\gamma_{0}))-s^{2})w_{\xi}(\gamma_{0})\big)-\big)\\
	&\hspace{30pt}-\sqrt{|k_{2}|}\,\text{sign}\big(\big((c^{2}(w(\gamma_{0}))-s^{2})w_{\xi}(\gamma_{0})\big)+\big)\Big]\sqrt{|c^{2}(w(\gamma_{0}))-s^{2}|}\phi(t,\gamma(t))\,dt.
\end{align*} 
For $w$ to be a weak solution this expression has to be zero for every test function, and we must have  
\begin{align*}
	&\Big[\sqrt{|k_{1}|}\,\text{sign}\big(\big((c^{2}(w(\gamma_{0}))-s^{2})w_{\xi}(\gamma_{0})\big)-\big)\\
	&\quad-\sqrt{|k_{2}|}\,\text{sign}\big(\big((c^{2}(w(\gamma_{0}))-s^{2})w_{\xi}(\gamma_{0})\big)+\big)\Big]\sqrt{|c^{2}(w(\gamma_{0}))-s^{2}|}=0.
\end{align*}
This concludes the proof of \eqref{eq:Res2}. 
\end{proof} 
 
\begin{remark} 
Note that \eqref{eq:nvwD1epsilon} and \eqref{eq:nvwD2epsilon} hold for any solution $u$ and curve $x=\gamma(t)$ as described before the lemma, not just for traveling wave solutions, where $\gamma(t)=st+\gamma_{0}$. 
\end{remark}

Using Lemma \ref{lem:nvw} we prove Theorem \ref{thm:nvw}.

\begin{proof}[Proof of Theorem \ref{thm:nvw}]

Consider $|s|\notin[\alpha,\beta]$. From \eqref{eq:Res1}, $w$ and its derivative $w_\xi$ are continuous at $\gamma_{0}$. In particular, $w$ is monotone and coincides with the global solution for \eqref{eq:psiDer} for a fixed $k$.  Thus, the resulting wave will be unbounded and hence not a weak solution to the NVW equation. We will not discuss this case further.

Now consider $|s|\in[\alpha,\beta]$. First we study the case $|s|\neq c(w(\gamma_{0}))$. For \eqref{eq:Res2} to be satisfied we must have
\begin{equation*}
	\sqrt{|k_{1}|}\,\text{sign}\big(w_{\xi}(\gamma_{0})-\big) -\sqrt{|k_{2}|}\,\text{sign}\big(w_{\xi}(\gamma_{0})+\big)=0.
\end{equation*}

If $\text{sign}\big(w_{\xi}(\gamma_{0})-\big)$ and $\text{sign}\big(w_{\xi}(\gamma_{0})+\big)$ have opposite sign we get $k_{1}=k_{2}=0$ and $w$ is constant in $D$.

If $\text{sign}\big(w_{\xi}(\gamma_{0})-\big)$ and $\text{sign}\big(w_{\xi}(\gamma_{0})+\big)$ have the same sign then $|k_{1}|=|k_{2}|$. Then the solution $w$ is monotone in $D$ and is given by \eqref{eq:psiDer}, where the constant $k$ is replaced by $k_1$. Since $w$ is a classical solution in $D_{1}$ and $D_{2}$, and $|s|\neq c(w(\gamma_{0}))$, we have $|s|\neq c(w(\xi))$ in $D$. Both $w$ and its derivative $w_\xi$ are continuous at $\gamma_{0}$. In particular, $w$ is monotone and coincides with the local solution in $D$ of the above differential equation for a fixed $k_1$. 

Thus, we showed that gluing solutions at points $\gamma_0$ so that $c(w(\gamma_0))\not =\vert s\vert$, does not yield a new solution. In particular, one can possibly only glue two solutions with different $k$ together at a point $\gamma_0$ to obtain a new solution, if $c(w(\gamma_0))=\vert s\vert$. This means in particular, for bounded, non-constant waves, that $c$ must have at least one extremal point and hence $w$ must have at least one inflection point by \eqref{eq:TWnvw}. 

Next we consider $|s|\in[\alpha,\beta]$ such that $|s|=c(w(\gamma_{0}))$. As discussed before, at points $\xi$ where $|s|=c(w(\xi))$ and $c'(w(\xi))=0$, $w_\xi(\xi)$ is unbounded and $w_\xi$ does not belong to $L^2_{\text{loc}}(\mathbb{R})$. Therefore, by the definition of a weak solution, we cannot use such waves as building blocks. This immediately excludes the cases $|s|=\alpha$ and $|s|=\beta$. Thus, we consider $|s|\in(\alpha,\beta)$ and assume that all points $\xi$ such that $|s|=c(w(\xi))$ satisfy $c'(w(\xi))\neq 0$.
 
The remaining case to be treated is $|s|\in(\alpha,\beta)$ such that $|s|=c(w(\gamma_{0}))$ and $c'(w(\gamma_{0}))\neq 0$. Using the same notation as in the proof of Lemma~\ref{lem:nvw}, denote by $u_1(t,x)=w_1(x-st)$ and $u_2(t,x)=w_2(x-st)$ the classical solutions to \eqref{eq:nvw} in  $D_{1}$ and $D_{2}$, respectively. Then $w_{1}$ and $w_{2}$ are locally H\"older continuous. Furthermore, we see from \eqref{eq:D1eq} and the corresponding equation for $D_{2}$, that $k_{1}$ and $k_{2}$ are finite and \eqref{eq:Res2} is satisfied for any values of $k_1$ and $k_2$. In particular, the functions $w_1$ and $w_2$ satisfy 
\begin{equation}
\label{eq:lastCase}
	w_{1,\xi}(\xi)=\pm\frac{\sqrt{|k_{1}|}}{\sqrt{|s^{2}-c^{2}(w_{1}(\xi))|}} \quad \text{and} \quad w_{2,\xi}(\xi)=\pm\frac{\sqrt{|k_{2}|}}{\sqrt{|s^{2}-c^{2}(w_{2}(\xi))|}}
\end{equation}
in $D_{1}$ and $D_{2}$, respectively. 

We study the case $w_{1,\xi}(\xi)\rightarrow\pm\infty$ as $\xi\rightarrow\gamma_{0}-$ and $w_{2,\xi}(\xi)\rightarrow\pm\infty$ as $\xi\rightarrow\gamma_{0}+$. It remains to show which solutions, if any, can be glued together.

Let $s>0$. Assume that $s=c(w_{1}(\gamma_{0}))=c(w_{2}(\gamma_{0}))$ and 
$c'(w_{1}(\gamma_{0}))=c'(w_{2}(\gamma_{0}))<0$. Since $c'$, $w_{1}$ and $w_{2}$ are continuous we have $c'(w_{1}(\xi))<0$ for $\xi<\gamma_{0}$ near $\gamma_{0}$ and $c'(w_{2}(\xi))<0$ for $\xi>\gamma_{0}$ near $\gamma_{0}$. We have the following four possibilities:

\textbf{1.} If $w_{1,\xi}(\xi)>0$ for $\xi<\gamma_{0}$ near $\gamma_{0}$ then $c(w_{1}(\xi))>s$ and from \eqref{eq:TWnvw} we get $w_{1,\xi\xi}(\xi)>0$.

\textbf{2.} If $w_{1,\xi}(\xi)<0$ for $\xi<\gamma_{0}$ near $\gamma_{0}$ then $c(w_{1}(\xi))<s$ and \eqref{eq:TWnvw} implies that $w_{1,\xi\xi}(\xi)<0$.

\textbf{3.} If $w_{2,\xi}(\xi)>0$ for $\xi>\gamma_{0}$ near $\gamma_{0}$ then $c(w_{2}(\xi))<s$ and by \eqref{eq:TWnvw} we have $w_{2,\xi\xi}(\xi)<0$.

\textbf{4.} If $w_{2,\xi}(\xi)<0$ for $\xi>\gamma_{0}$ near $\gamma_{0}$ then $c(w_{2}(\xi))>s$ and by \eqref{eq:TWnvw} we have $w_{2,\xi\xi}(\xi)>0$.

We have now $4$ possibilities for gluing waves at $\gamma_0$: 1. and 4., 1. and 3., 2. and 3., and 2. and 4. In all cases the derivatives are unbounded at the gluing point. For instance, combining 1. and 4. results in a wave with a cusp at $\gamma_{0}$. Since the constants $k_{1}$ and $k_{2}$ may differ, $w_{1}$ and $w_{2}$ may have different slope away from $\gamma_{0}$.  

Another possibility, due to \eqref{eq:TWnvw}, is that either $w_{1}$ or $w_{2}$ is constant. We can combine constant solutions with singular waves.
For instance, let $w_{1}(\xi)=w_{2}(\gamma_{0})$ for $\xi\leq\gamma_{0}$, and $w_{2}$ be as in 3. 

We can also combine the wave in 1. with the constant solution where $w_{2}(\xi)=w_{1}(\gamma_{0})$ for $\xi\geq\gamma_{0}$.

A similar analysis can be done in the case $c'(w_{1}(\gamma_{0}))=c'(w_{2}(\gamma_{0}))>0$.

Note that the resulting waves may be unbounded. This is for example the case if $c(u)=\vert s\vert$ for exactly one $u\in \mathbb{R}$. On the other hand, the resulting waves belong to $L^2(D)$ and are locally H{\"o}lder continuous.

Finally we study an example to illustrate how we can glue local waves to get a bounded traveling wave. Let us consider a function $c$ as depicted in Figure~\ref{fig:Figc} and the wave composed of 1. and 3.. For $\xi<\gamma_{0}$ near $\gamma_{0}$, it is given by $w_{1}(\xi)$ which is strictly increasing and convex. For $\xi>\gamma_{0}$ near $\gamma_{0}$, it is given by $w_{2}(\xi)$ which is strictly increasing and concave. In this case we assumed that the function $c$ is strictly decreasing at the point $w_{1}(\gamma_{0})=w_{2}(\gamma_{0})$. Now we assume that $c$ has a local minimum to the right of this point. More precisely, we assume that there exists $E_{1}>\gamma_{0}$ such that $c'(w_{2}(E_{1}))=0$, $c'(w_{2}(\xi))<0$ for all $\gamma_{0}\leq\xi<E_{1}$, and $c'(w_{2}(\xi))>0$ for all $E_{1}<\xi<\xi_{1}$ for $\xi_{1}$ near $E_{1}$, so that $c(w_{2}(\xi))<s$ for all $E_{1}<\xi\leq\xi_{1}$. 

The function $w_{2}(\xi)$ is a strictly increasing classical solution for all $\gamma_{0}<\xi<\xi_{1}$. Furthermore, $w_{2}(\xi)$ has an inflection point at $\xi=E_{1}$ and is concave for $\gamma_{0}\leq\xi<E_{1}$ and convex for $E_{1}<\xi\leq\xi_{1}$.

If $c'(w_{2}(\xi))>0$ for all $\xi_{1}\leq\xi\leq\gamma_{1}$ where $\gamma_{1}$ satisfies $c(w_{2}(\gamma_{1}))=s$, we can continue the wave after $\gamma_1$ either by a singular wave  or by setting $w$ equal to $w(\gamma_{1})$ for $\gamma_{1}<\xi$. The situation is illustrated in Figure \ref{fig:Figw}.

\begin{figure}
	\centering
	\begin{minipage}[t]{0.49\textwidth}
		\includegraphics[width=\textwidth]{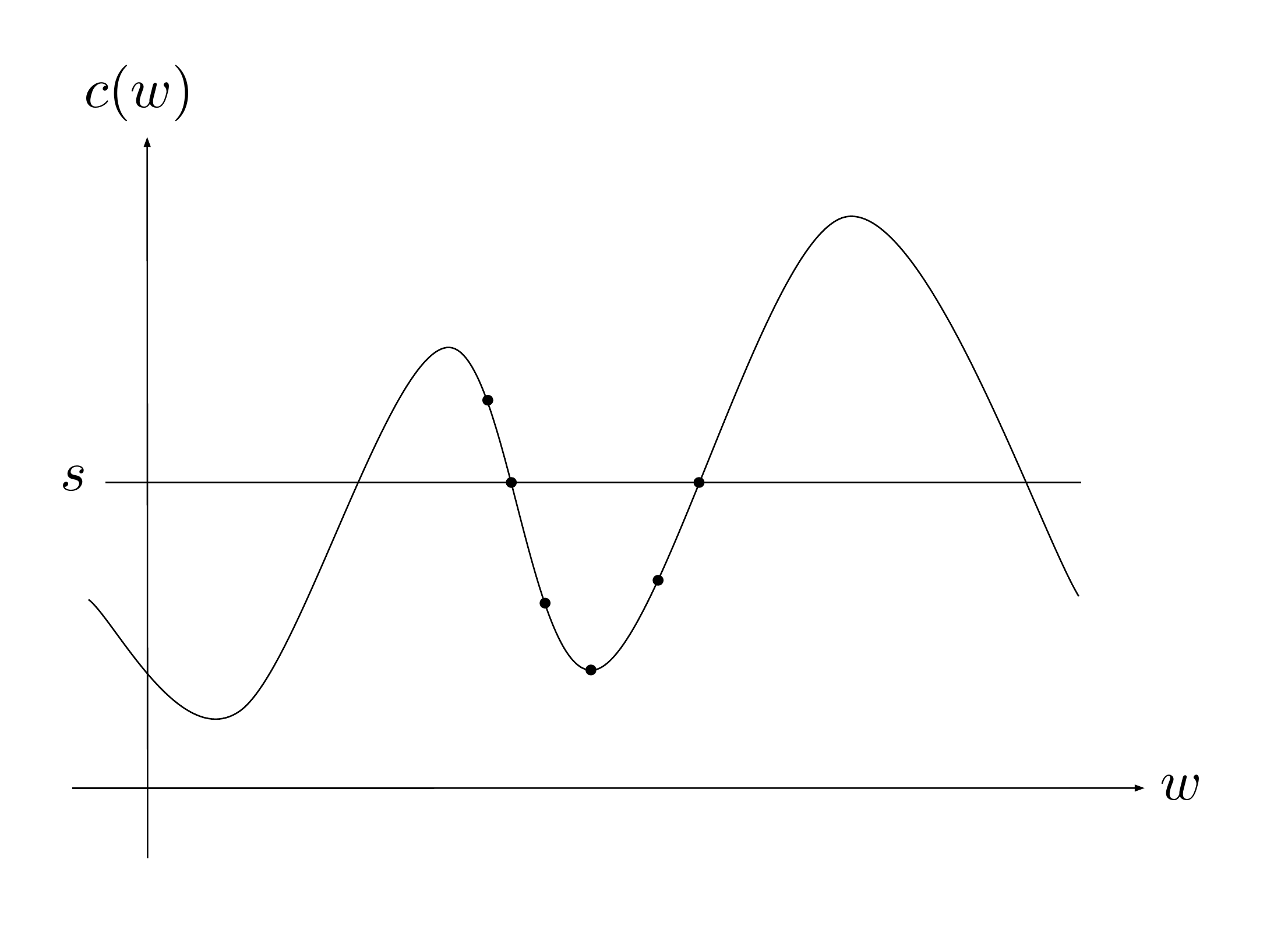}
		\caption{}{The points are from left to right: $w_{1}(\xi)$ for $\xi<\gamma_{0}$, $w_{1}(\gamma_{0})=w_{2}(\gamma_{0})$, $w_{2}(\xi)$ for $\gamma_{0}<\xi<E_{1}$, $w_{2}(E_{1})$, $w_{2}(\xi_{1})$ and $w_{2}(\gamma_{1})$.}
		\label{fig:Figc}
	\end{minipage}
	\hfill
	\begin{minipage}[t]{0.49\textwidth}
		\includegraphics[width=\textwidth]{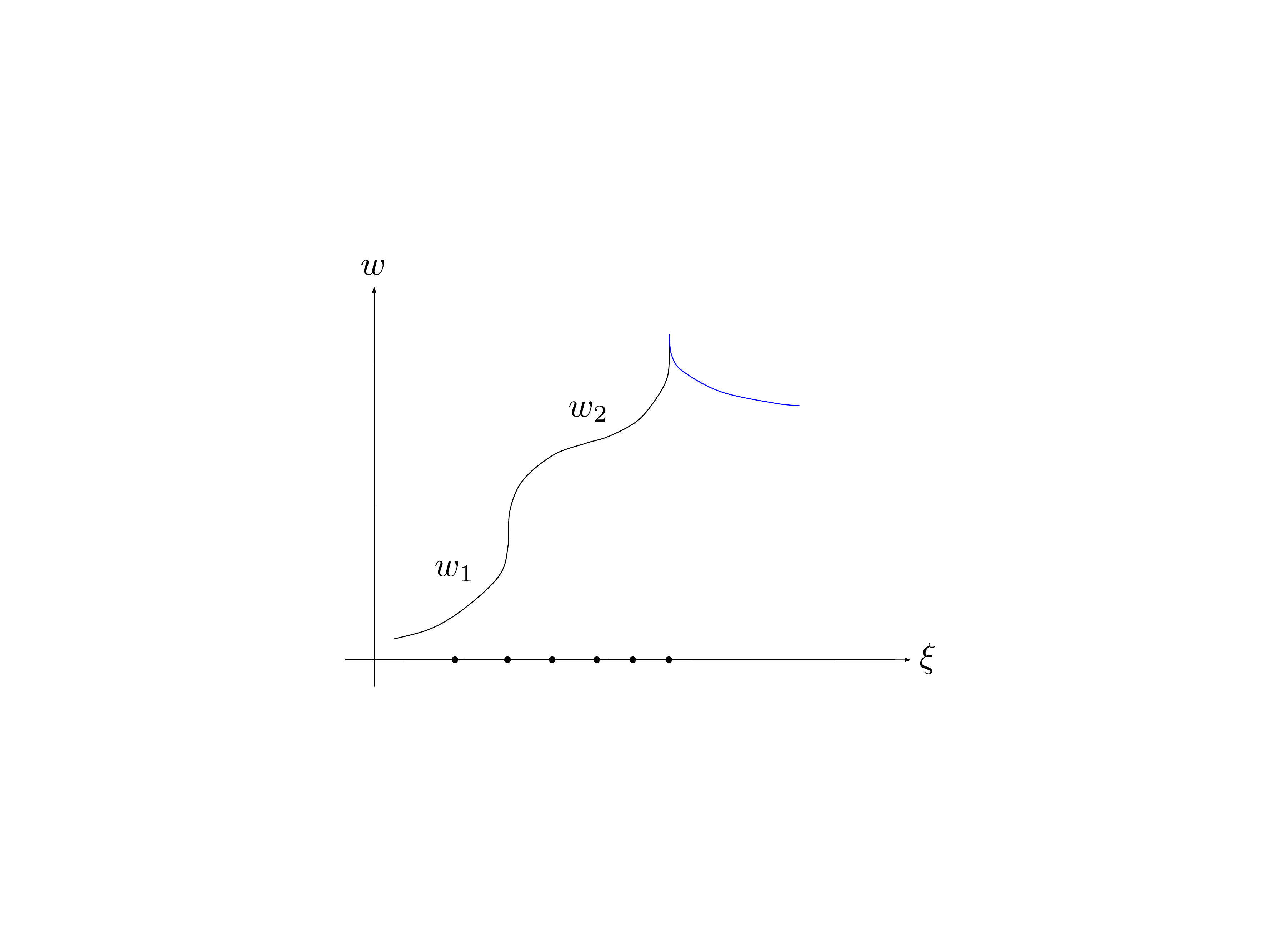}
		\caption{}{The points are from left to right: $\xi<\gamma_{0}$, $\gamma_{0}$, $\gamma_{0}<\xi<E_{1}$, $E_{1}$, $\xi_{1}$ and $\gamma_{1}$. The blue part to the right shows one of the three ways of continuing the wave for $\xi>\gamma_{1}$.}
		\label{fig:Figw}
	\end{minipage}
\end{figure}

Depending on the function $c$, we can continue this gluing procedure to produce a wave $w$ consisting of decreasing, increasing, and constant segments. Note that to get a non-constant bounded traveling wave we have to use increasing or decreasing parts. The derivative of the composite wave belongs to $L^{2}_{\text{loc}}(\mathbb{R})$. If $w_\xi\in L^2(\mathbb{R})$, then $w$ is not only a global traveling wave solution, but also a weak solution. 
\end{proof}

\section{The Camassa--Holm Equation}\label{sec:CH}

Now we study weak traveling wave solutions of the CH equation \eqref{eq:CH}. We insert for a traveling wave solution $u(t,x)=w(x-st)$, and get
\begin{equation}
\label{eq:TWCH}
	-sw_{\xi}+sw_{\xi\xi\xi}+3ww_{\xi}-2w_{\xi}w_{\xi\xi}-ww_{\xi\xi\xi} =0.
\end{equation}

We say that $u$ is a local, classical traveling wave solution of \eqref{eq:CH} if $u(t,x)=w(x-st)$ for some $w$ in $C^{3}(I)$, where $I$ denotes some interval, and satisfies \eqref{eq:TWCH}. 

Integrating \eqref{eq:TWCH} yields
\begin{equation}
\label{eq:CHTWa}
	-sw+sw_{\xi\xi}+\frac{3}{2}w^{2}-\frac{1}{2}w_{\xi}^{2}-ww_{\xi\xi}=a,
\end{equation}
where $a$ is some integration constant. We multiply with $2w_\xi$, integrate once more and get
\begin{equation}
\label{eq:CHTWb}
	-sw^{2}+w^{3}+w_{\xi}^{2}(s-w)=2aw+b,
\end{equation}
where $b$ is some integration constant.

We study if we can glue together local, classical wave solutions like we did in the previous section for the NVW equation. We are interested in the situation where the composite wave has a discontinuous derivative at the gluing points. 

First we derive a weak form of the CH equation. Assume that we have a bounded solution $u\in C^{3}((0,\infty)\times\mathbb{R})$. We multiply \eqref{eq:CH} by a smooth test function $\phi\in C_{c}^{\infty}((0,\infty)\times\mathbb{R})$ and integrate by parts. This yields
\begin{equation}
\label{eq:weakFormCH}
	\int_{0}^{\infty}\int_{-\infty}^{\infty}
	\big(u\phi_{t}-u\phi_{txx}+\frac{3}{2}u^{2}\phi_{x}+uu_{x}\phi_{xx}+\frac{1}{2}u_{x}^{2}\phi_{x}
	\big)\,dx\,dt=0,
\end{equation}
which serves as a basis for defining weak solutions.

A function $u$ satisfying $u(t,\cdot)\in\Linf(\mathbb{R})\cap H^{1}_{\text{loc}}(\mathbb{R})$ for all $t\geq 0$ is said to be a weak solution of \eqref{eq:CH} if \eqref{eq:weakFormCH} holds for all test functions $\phi$ in $C_{c}^{\infty}((0,\infty)\times\mathbb{R})$. In the case of a traveling wave solution, $u(t,x)=w(x-st)$, \eqref{eq:weakFormCH} reads
\begin{equation*}
	\int_{0}^{\infty}\int_{-\infty}^{\infty}
	\big(w\phi_{t}-w\phi_{txx}+\frac{3}{2}w^{2}\phi_{x}+ww_{\xi}\phi_{xx}+\frac{1}{2}w_{\xi}^{2}\phi_{x}\big)\,dx\,dt.
\end{equation*}	

Assume that $u_{t}$ and $u_{x}$ have discontinuities that move along a smooth curve $\Gamma: x=\gamma(t)$, where $\gamma$ is a strictly increasing function, and that $u$ is a local, classical solution of \eqref{eq:CH} on each side of $\gamma(t)$. 

\begin{lemma}
\label{lem:CH}
Given a curve $\Gamma: x=\gamma(t)=st+\gamma_0$, where $\gamma_0$ is a constant, denote by $D$ a neighborhood of $(\bar t, \gamma(\bar t))\in \Gamma$. Furthermore, let $D=D_1\cup \Gamma\vert_D\cup D_2$, where $D_1$ and $D_2$ are the parts of $D$ to the left and to the right of $\Gamma$, respectively, see Figure~\ref{fig:FigD}. Consider two local, classical traveling wave solution $u_1$ and $u_2$ of \eqref{eq:CH} in $D_1$ and $D_2$, respectively. Assume that we glue these waves at $\Gamma$ to obtain a continuous traveling wave $u(t,x)=w(x-st)$ in $D$, which satisfies 
\begin{equation}\label{cond:trw}
	\iint_{D}\big[w\phi_{t}-w\phi_{txx}+\frac{3}{2}w^{2}\phi_{x}+ww_{\xi}\phi_{xx}+\frac{1}{2}w_{\xi}^{2}\phi_{x}\big]\,dx\,dt=0 \quad \text{ for any } \phi\in C_c^\infty(D).
\end{equation}
	
Then we have $a_{1}=a_{2}$, where $a_{1}$ and $a_{2}$ denote the constants in \eqref{eq:CHTWa} corresponding to the local, classical solutions $u_1(t,x)=w_1(x-st)$ and $u_2(t,x)=w_2(x-st)$ in $D_{1}$ and $D_{2}$, respectively.
	
If $w_{\xi}$ and $w_{\xi\xi}$ are bounded on the curve $x=\gamma(t)$, then
\begin{subequations}
\begin{equation}
\label{eq:Res1CH}
	(w(\gamma_{0})-s)(w_{\xi}(\gamma_{0}-)-w_{\xi}(\gamma_{0}+))=0.
\end{equation}

If $w_{\xi}$ and $w_{\xi\xi}$ may be unbounded on the curve $x=\gamma(t)$, then 
\begin{aalign}
\label{eq:Res2CH}
	&\emph{sign}\big(\big((w(\gamma_{0})-s)w_{\xi}(\gamma_{0})\big)-\big)\sqrt{(w^{2}(\gamma_{0})(w(\gamma_{0})-s)-2a_{1}w(\gamma_{0})-b_{1})(w(\gamma_{0})-s)}\\
	&-\emph{sign}\big(\big((w(\gamma_{0})-s)w_{\xi}(\gamma_{0})\big)+\big)\sqrt{(w^{2}(\gamma_{0})(w(\gamma_{0})-s)-2a_{1}w(\gamma_{0})-b_{2})(w(\gamma_{0})-s)}=0,
\end{aalign}
where $b_{1}$ and $b_{2}$ are the constants in \eqref{eq:CHTWb} corresponding to the local, classical solution $u_1(t,x)=w_1(x-st)$ and $u_2(t,x)=w_2(x-st)$ in $D_1$ and $D_2$, respectively.
\end{subequations}
\end{lemma}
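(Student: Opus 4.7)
The plan is to mirror the structure of the proof of Lemma~\ref{lem:nvw}. I would localize the weak form \eqref{cond:trw} to $\varepsilon$-tubular neighborhoods $D_1^\varepsilon$ and $D_2^\varepsilon$ of the discontinuity curve, parametrize their inner boundaries as $\Gamma_i^\varepsilon: x = \gamma_i^\varepsilon(t) = \gamma(t) \mp \varepsilon\sqrt{s^2+1}$, and then exploit the fact that $u$ is a classical $C^3$ solution on each side to move all derivatives off $\phi$ by repeated integration by parts. Because \eqref{eq:CH} is third order in $x$ and of mixed order two in $t,x$, the IBPs must be carried out as many as three times, so that the resulting line integrals along $\Gamma_i^\varepsilon$ pair traces of $u,u_t,u_x,u_{tx},u_{xx}$ from the classical side against traces of $\phi,\phi_t,\phi_x,\phi_{tx},\phi_{xx}$ on $\Gamma_i^\varepsilon$. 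Sending $\varepsilon\to 0$ and using $u(t,x)=w(x-st)$ (hence $u_t=-sw_\xi$, $u_x=w_\xi$, $u_{xx}=w_{\xi\xi}$, $u_{tx}=-sw_{\xi\xi}$) reduces the whole contribution of the weak form to a single line integral on $\Gamma$.

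Along $\Gamma$, the test function $\phi$ and its independent tangential derivatives can be prescribed freely; grouping the surviving line-integral coefficients against these independent quantities, I expect exactly two independent jump relations to emerge. The first, produced by the highest-order $\phi$-terms (those originating in $u\phi_{txx}$ and $uu_x\phi_{xx}$), carries a $w_{\xi\xi}$-dependence. Rewriting \eqref{eq:CHTWa} as $(s-w)w_{\xi\xi} = a + sw - \tfrac{3}{2}w^2 + \tfrac{1}{2}w_\xi^2$ and using that $w$ is continuous across $\gamma_0$ while $w_\xi^2$ is determined algebraically in terms of $w,a,b$ by \eqref{eq:CHTWb}, all $w$-dependent contributions match between the two sides, and what remains of this first relation is exactly $a_1=a_2$. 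The second relation, produced by the lower-order $\phi$-terms, is a jump condition on $(w-s)w_\xi$ at $\gamma_0$.

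In the case where $w_\xi$ and $w_{\xi\xi}$ stay bounded on $\Gamma$ this jump condition reads $(w(\gamma_0)-s)(w_\xi(\gamma_0-)-w_\xi(\gamma_0+))=0$, which is \eqref{eq:Res1CH}. In general, the one-sided derivatives may blow up at $\gamma_0$, so I would eliminate $w_\xi$ algebraically via \eqref{eq:CHTWb}: squaring gives
\begin{equation*}
    \bigl((w-s)w_\xi\bigr)^2 = (w-s)\bigl(w^2(w-s)-2aw-b\bigr),
\end{equation*}
whence
\begin{equation*}
    (w-s)w_\xi = \mathrm{sign}\bigl((w-s)w_\xi\bigr)\sqrt{(w-s)\bigl(w^2(w-s)-2aw-b\bigr)},
\end{equation*}
and substituting the one-sided limits on each side of $\gamma_0$ with constants $(a_1,b_1)$ and $(a_2,b_2)$ respectively, together with the previously established $a_1=a_2$, reproduces \eqref{eq:Res2CH} exactly. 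The main obstacle is the bookkeeping of the boundary terms from three orders of $x$-differentiation and from the mixed $\phi_{txx}$ contribution: one has to verify that, after reorganization, only the two independent combinations of traces described above multiply independent data of $\phi$ on $\Gamma$, and that the orientation reversal between $\Gamma_1^\varepsilon$ and $\Gamma_2^\varepsilon$ produces the minus signs displayed in \eqref{eq:Res2CH}, in complete analogy with Lemma~\ref{lem:nvw}.
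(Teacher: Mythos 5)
Your proposal follows essentially the same route as the paper's proof: the same $\varepsilon$-tubular decomposition and Green's theorem reduction to a line integral along $\Gamma$ that pairs the constant $a_i$ (obtained by recognizing the coefficient of $\phi$ as the left-hand side of \eqref{eq:CHTWa}) against $\phi$ and the flux $(w-s)w_\xi$ (eliminated via \eqref{eq:CHTWb} in the unbounded case to get the sign/square-root form) against $\phi_x$, followed by separating the two relations using the independence of the traces of $\phi$ and $\phi_x$ on $\Gamma$, which the paper realizes concretely with a test function satisfying $\tilde\phi_x\vert_\Gamma=0$. The only cosmetic difference is that in the paper $a_1=a_2$ drops out directly because the entire coefficient of $\phi$ collapses to $a_i$ by \eqref{eq:CHTWa} (with the $\phi_{tx}$-coefficient $\mp w(\gamma_0)$ cancelling by continuity of $w$), rather than through a term-by-term matching of $w$-dependent contributions.
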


\begin{proof}
We use the same notation as in the proof of	Lemma \ref{lem:nvw}. Moreover, many of the calculations are similar to the ones in the previous proof, and we leave out the details. Using that $u$ is a classical solution in $D_{1}^{\varepsilon}$ and integration by parts, leads to
\begin{aalign}
\label{eq:IntPart1}
	&\iint_{D_{1}^{\varepsilon}}
	\big(u\phi_{t}-u\phi_{txx}+\frac{3}{2}u^{2}\phi_{x}+uu_{x}\phi_{xx}+\frac{1}{2}u_{x}^{2}\phi_{x}
	\big)\,dx\,dt\\
	&=\iint_{D_{1}^{\varepsilon}}
	\bigg[\Big(\frac{3}{2}u^{2}\phi-\frac{1}{2}u_{x}^{2}\phi+uu_{x}\phi_{x}\Big)_{x}
	+\Big(u\phi+u_{x}\phi_{x}\Big)_{t}\bigg]\,dx\,dt-\int_{I_{1}^{\varepsilon}}\big(uu_{xx}\phi+u\phi_{tx}+u_{tx}\phi\big)(t,\gamma_{1}^{\varepsilon}(t))\,dt.
\end{aalign}
Now we apply Green's theorem, and use that the integrand is zero everywhere on $\partial D_{1}^{\varepsilon}$ except on the part corresponding to $\gamma_{1}^{\varepsilon}(t)$. We assume that $u(t,x)=w(x-st)$ is a classical traveling wave solution of \eqref{eq:CH}. Then
\begin{aalign}
\label{eq:WeakForm3}
	&\iint_{D_{1}^{\varepsilon}}
	\big(w\phi_{t}-w\phi_{txx}+\frac{3}{2}w^{2}\phi_{x}+ww_{\xi}\phi_{xx}+\frac{1}{2}w_{\xi}^{2}\phi_{x}\big)\,dx\,dt\\
	&=\int_{I_{1}^{\varepsilon}}
	\bigg(\bigg[-ws+\frac{3}{2}w^{2}-\frac{1}{2}w_{\xi}^{2}
	-(w-s)w_{\xi\xi}\bigg](\gamma_{1}^{\varepsilon}(t)-st)\phi(t,\gamma_{1}^{\varepsilon}(t))\\
	&\hspace{36pt}+\big[(w-s)w_{\xi}\big](\gamma_{1}^{\varepsilon}(t)-st)\phi_{x}(t,\gamma_{1}^{\varepsilon}(t))-w(\gamma_{1}^{\varepsilon}(t)-st)\phi_{tx}(t,\gamma_{1}^{\varepsilon}(t))
	\bigg)\,dt.
\end{aalign}
Similarly, we get
\begin{aalign}
\label{eq:WeakForm2ndTermi}
	&\iint_{D_{2}^{\varepsilon}}
	\big(w\phi_{t}-w\phi_{txx}+\frac{3}{2}w^{2}\phi_{x}+ww_{\xi}\phi_{xx}+\frac{1}{2}w_{\xi}^{2}\phi_{x}\big)\,dx\,dt\\
	&=\int_{I_{2}^{\varepsilon}}
	\bigg(-\bigg[-ws+\frac{3}{2}w^{2}-\frac{1}{2}w_{\xi}^{2}
	-(w-s)w_{\xi\xi}\bigg](\gamma_{2}^{\varepsilon}(t)-st)\phi(t,\gamma_{2}^{\varepsilon}(t))\\
	&\hspace{36pt}-\big[(w-s)w_{\xi}\big](\gamma_{2}^{\varepsilon}(t)-st)\phi_{x}(t,\gamma_{2}^{\varepsilon}(t))+w(\gamma_{2}^{\varepsilon}(t)-st)\phi_{tx}(t,\gamma_{2}^{\varepsilon}(t))
	\bigg)\,dt.
\end{aalign}

From \eqref{eq:CHTWa} and \eqref{eq:CHTWb} we have
\begin{equation}
\label{eq:CHTWClassical1}
	-sw+sw_{\xi\xi}+\frac{3}{2}w^{2}-\frac{1}{2}w_{\xi}^{2}-ww_{\xi\xi}=a_{i}
\end{equation}
and
\begin{equation}
\label{eq:CHTWClassical1ii}
	-sw^{2}+w^{3}+w_{\xi}^{2}(s-w)=2a_{i}w+b_{i}
\end{equation}
in $\overline{D_{i}^{\varepsilon}}$, $i=1,2$. Assume that $w_{\xi}$ and $w_{\xi\xi}$ are bounded on the curve $x=\gamma(t)$. We insert \eqref{eq:CHTWClassical1} in \eqref{eq:WeakForm3}, and get since $w$, $\phi$ and the derivatives of $\phi$ are continuous,
\begin{aalign}
	\label{eq:Term1i}
	&\lim_{\varepsilon\rightarrow 0}\iint_{D_{1}^{\varepsilon}}
	\big(w\phi_{t}-w\phi_{txx}+\frac{3}{2}w^{2}\phi_{x}+ww_{\xi}\phi_{xx}+\frac{1}{2}w_{\xi}^{2}\phi_{x}\big)\,dx\,dt\\
	&=\int_{I}
	\bigg(a_{1}\phi(t,\gamma(t))+\big[(w(\gamma_{0})-s)w_{\xi}(\gamma_{0}-)\big]\phi_{x}(t,\gamma(t))-w(\gamma_{0})\phi_{tx}(t,\gamma(t))
	\bigg)\,dt.
\end{aalign}
Combining this with the limit corresponding to \eqref{eq:WeakForm2ndTermi} yields
\begin{aalign}
\label{eq:WeakFormLastStepi}
	&\iint_{D}
	\big(w\phi_{t}-w\phi_{txx}+\frac{3}{2}w^{2}\phi_{x}+ww_{\xi}\phi_{xx}+\frac{1}{2}w_{\xi}^{2}\phi_{x}\big)\,dx\,dt\\
	&=\int_{I}\bigg(
	(a_{1}-a_{2})\phi(t,\gamma(t))+(w(\gamma_{0})-s)(w_{\xi}(\gamma_{0}-)-w_{\xi}(\gamma_{0}+))
	\phi_{x}(t,\gamma(t))\bigg)\,dt,
\end{aalign}
for all $\phi\in C_{c}^{\infty}(D)$. Now we derive conditions that ensure that the integral in \eqref{eq:WeakFormLastStepi} is equal to zero for all test functions $\phi\in C_{c}^{\infty}(D)$. For a positive number $d$ and constants $k$ and $l$ satisfying $k<l$, consider the domain $\tilde{D}\subset D$ bounded by the lines $t=k$, $t=l$ and $x=\gamma(t)\pm d$. We define the test function $\tilde \phi\in C_c^\infty(D)$ by
\begin{equation}
\label{eq:testFunc}
	\tilde{\phi}(t,x)= \exp\left\{-\frac{1}{d-(x-\gamma(t))^{2}}-\frac{1}{\frac{1}{4}(l-k)^{2}-(t-\frac{k+l}{2})^{2}}\right\},
\end{equation} 
which is positive and smooth in $\tilde{D}$ and equals zero on the boundary $\partial \tilde{D}$. In particular, $\tilde{\phi}_{x}(t,\gamma(t))=0$ for all $t\in[k,l]$. From \eqref{cond:trw} and \eqref{eq:WeakFormLastStepi}, we then get
\begin{equation*}
	\iint_{\tilde{D}}
	\big(w\tilde{\phi}_{t}-w\tilde{\phi}_{txx}+\frac{3}{2}w^{2}\tilde{\phi}_{x}+ww_{\xi}\tilde{\phi}_{xx}+\frac{1}{2}w_{\xi}^{2}\tilde{\phi}_{x}\big)\,dx\,dt=\int_{a}^{b}\big[a_{1}-a_{2}\big]\tilde{\phi}(t,\gamma(t))\,dt=0,
\end{equation*} 
and since $\tilde{\phi}$ is positive in $\tilde{D}$ this implies that $a_{1}=a_{2}$. Furthermore, since \eqref{eq:WeakFormLastStepi} should be equal to zero for all test functions $\phi$, we must have
\begin{aalign}
	\label{eq:WeakFormPhixi}
	&\iint_{D}
	\big(w\phi_{t}-w\phi_{txx}+\frac{3}{2}w^{2}\phi_{x}+ww_{\xi}\phi_{xx}+\frac{1}{2}w_{\xi}^{2}\phi_{x}\big)\,dx\,dt\\
	&=\int_{I}(w(\gamma_{0})-s)(w_{\xi}(\gamma_{0}-)-w_{\xi}(\gamma_{0}+))
	\phi_{x}(t,\gamma(t))\,dt=0
\end{aalign} 
for all $\phi\in C_{c}^{\infty}(D)$, which implies
$(w(\gamma_{0})-s)(w_{\xi}(\gamma_{0}-)-w_{\xi}(\gamma_{0}+))=0$. This proves \eqref{eq:Res1CH}.

Now assume that $w_{\xi}$ and $w_{\xi\xi}$ may be unbounded on the curve $x=\gamma(t)$. Recall that $w$ is a classical solution in $D_{i}^{\varepsilon}$, $i=1,2$. 

Using \eqref{eq:CHTWClassical1ii} we write
\begin{equation*}
	(w-s)w_{\xi}=\text{sign}\big((w-s)w_{\xi}\big)\sqrt{(w^{2}(w-s)-2a_{1}w-b_{1})(w-s)}
\end{equation*}
in $\overline{D_{1}^{\varepsilon}}$, which together with \eqref{eq:WeakForm3} and \eqref{eq:CHTWClassical1} yields
\begin{align*}
	&\lim_{\varepsilon\rightarrow 0}\iint_{D_{1}^{\varepsilon}}
	\big(w\phi_{t}-w\phi_{txx}+\frac{3}{2}w^{2}\phi_{x}+ww_{\xi}\phi_{xx}+\frac{1}{2}w_{\xi}^{2}\phi_{x}\big)\,dx\,dt\\
	&=\int_{I}\bigg(a_{1}\phi(t,\gamma(t))-w(\gamma_{0})\phi_{tx}(t,\gamma(t))+\text{sign}\big(\big((w(\gamma_{0})-s)w_{\xi}(\gamma_{0})\big)-\big)\\
	&\hspace{40pt}\times\sqrt{(w^{2}(\gamma_{0})(w(\gamma_{0})-s)-2a_{1}w(\gamma_{0})-b_{1})(w(\gamma_{0})-s)}\phi_{x}(t,\gamma(t))\bigg)\,dt.
\end{align*}
Combining this with the limit corresponding to \eqref{eq:WeakForm2ndTermi} yields
\begin{align*}
	&\iint_{D}\big(w\phi_{t}-w\phi_{txx}+\frac{3}{2}w^{2}\phi_{x}+ww_{\xi}\phi_{xx}+\frac{1}{2}w_{\xi}^{2}\phi_{x}\big)\,dx\,dt=\int_{I}\bigg((a_{1}-a_{2})\phi(t,\gamma(t))\\
	&\quad+\Big(\text{sign}\big(\big((w(\gamma_{0})-s)w_{\xi}(\gamma_{0})\big)-\big)\sqrt{(w^{2}(\gamma_{0})(w(\gamma_{0})-s)-2a_{1}w(\gamma_{0})-b_{1})(w(\gamma_{0})-s)}\\
	&\quad-\text{sign}\big(\big((w(\gamma_{0})-s)w_{\xi}(\gamma_{0})\big)+\big)\sqrt{(w^{2}(\gamma_{0})(w(\gamma_{0})-s)-2a_{2}w(\gamma_{0})-b_{2})(w(\gamma_{0})-s)}\Big)\phi_{x}(t,\gamma(t))
	\bigg)\,dt
\end{align*}
for all $\phi\in C_{c}^{\infty}(D)$.

As before, we choose the test function $\tilde{\phi}\in C_c^\infty(D)$ given by \eqref{eq:testFunc} to obtain $a_{1}=a_{2}$.  Thus, the first term in the above integral drops out, and for the remaining integral to be equal to zero for all test functions, we must have
\begin{align*}
	&\text{sign}\big(\big((w(\gamma_{0})-s)w_{\xi}(\gamma_{0})\big)-\big)\sqrt{(w^{2}(\gamma_{0})(w(\gamma_{0})-s)-2a_{1}w(\gamma_{0})-b_{1})(w(\gamma_{0})-s)}\\
	&-\text{sign}\big(\big((w(\gamma_{0})-s)w_{\xi}(\gamma_{0})\big)+\big)\sqrt{(w^{2}(\gamma_{0})(w(\gamma_{0})-s)-2a_{1}w(\gamma_{0})-b_{2})(w(\gamma_{0})-s)}=0.
\end{align*}
This concludes the proof of \eqref{eq:Res2CH}.
\end{proof}

We apply Lemma \ref{lem:CH} to study which local, classical traveling waves we can glue together.

\textbf{Bounded derivatives}

\textbf{Case 1.} Consider $s\neq w(\gamma_{0})$. For \eqref{eq:Res1CH} to be satisfied we must have $w_{\xi}(\gamma_{0}-)=w_{\xi}(\gamma_{0}+)$, i.e., the derivative is continuous at $\gamma_{0}$. From \eqref{eq:CHTWClassical1ii} we then get that $b_{1}=b_{2}$. If $w_\xi(\gamma_{0}-)=w_\xi(\gamma_{0}+)\not =0$, then $w$ coincides with the local solution in $D$ of the differential equation \eqref{eq:CHTWClassical1ii}. If on the other hand $w_\xi(\gamma_{0}-)=w_\xi(\gamma_{0}+)=0$, we have two possibilities: Either $w$  coincides with the local solution in $D$ of the differential equation \eqref{eq:CHTWClassical1ii} or $w_\xi$ changes sign at $\gamma_0$, so that $w$ has a local maximum or minimum at $\gamma_0$. The latter case occurs when constructing periodic solutions.

\textbf{Case 2.} If $s=w(\gamma_{0})$, \eqref{eq:Res1CH} is satisfied. Since $u(t,\gamma(t))=w(\gamma(t)-st)=w(\gamma_{0})$ we get $u(t,\gamma(t))=s$ for all $t\geq 0$. We denote the solution in $D_{1}$ and $D_{2}$ by $w_{1}$ and $w_{2}$, respectively. Then $w_{1}(\gamma_{0})=w_{2}(\gamma_{0})=s$. We let $\xi$ tend to $\gamma_{0}$ in \eqref{eq:CHTWClassical1} and obtain $a_{1}=\frac{1}{2}w_{1}^{2}(\gamma_{0})-\frac{1}{2}w_{1,\xi}^{2}(\gamma_{0})$ and $a_{2}=\frac{1}{2}w_{2}^{2}(\gamma_{0})-\frac{1}{2}w_{2,\xi}^{2}(\gamma_{0})$. Since $a_{1}=a_{2}$ this implies that $w_{1,\xi}^{2}(\gamma_{0})=w_{2,\xi}^{2}(\gamma_{0})$. 

If $w_{1,\xi}(\gamma_{0})=w_{2,\xi}(\gamma_{0})$, one has $b_1=b_2$ and as in Case 1 $w$ coincides with the local solution in $D$ of the differential equation \eqref{eq:CHTWClassical1ii}.

If $w_{1,\xi}(\gamma_{0})=-w_{2,\xi}(\gamma_{0})$, we get from \eqref{eq:CHTWClassical1ii}, that $2a_{1}s+b_{1}=0$ and $2a_{1}s+b_{2}=0$, which implies $b_{1}=b_{2}$. Furthermore, \eqref{eq:CHTWClassical1ii} takes the form
\begin{equation*}
\big(w_{1,\xi}^{2}(\xi)-w_{1}^{2}(\xi)+2a_{1}\big)\big(s-w_{1}(\xi)\big)=0
 \quad \text{and} \quad 
\big(w_{2,\xi}^{2}(\xi)-w_{2}^{2}(\xi)+2a_{1}\big)\big(s-w_{2}(\xi)\big)=0
\end{equation*}
in $\overline{D_{1}^{\varepsilon}}$ and $\overline{D_{2}^{\varepsilon}}$, respectively. Assuming that $w_{1}$ and $w_{2}$ are not constant and equal to $s$ near $\gamma_{0}$, we have
\begin{equation}
\label{eq:EQN1}
w_{1,\xi}^{2}-w_{1}^{2}+2a_{1}=0 \quad \text{and} \quad w_{2,\xi}^{2}-w_{2}^{2}+2a_{1}=0.
\end{equation}
For this to be well-defined we require $w_{1}^{2}-2a_{1}\geq 0$ and $w_{2}^{2}-2a_{1}\geq 0$, and in particular,
\begin{equation}\label{eq:condpeak}
w_{1,\xi}=\pm\sqrt{w_{1}^{2}-2a_{1}} \quad \text{and} \quad w_{2,\xi}=\pm\sqrt{w_{2}^{2}-2a_{1}}.
\end{equation}
Note that $w_1$ and $w_2$ can only change from increasing to decreasing or the other way round if $w_1^2=2a_1$. We differentiate \eqref{eq:EQN1} and get
\begin{equation*}
w_{1,\xi}(w_{1,\xi\xi}-w_{1})=0 \quad \text{and} \quad w_{2,\xi}(w_{2,\xi\xi}-w_{2})=0,
\end{equation*}
and since the solutions are not constant we have
\begin{equation}
\label{eq:EQN2}
w_{1,\xi\xi}=w_{1} \quad \text{and} \quad w_{2,\xi\xi}=w_{2}.
\end{equation}
Letting $\xi$ tend to $\gamma_{0}$ in \eqref{eq:EQN2} yields
$w_{1,\xi\xi}(\gamma_{0})=w_{1}(\gamma_{0})$ and $w_{2,\xi\xi}(\gamma_{0})=w_{2}(\gamma_{0})$, so that if $s$ is positive then $w_{1}$ and $w_{2}$ are convex. Since the functions are not constant and $w_{1,\xi}(\gamma_{0})=-w_{2,\xi}(\gamma_{0})$, this implies that $w_{1}$ is increasing and $w_{2}$ is decreasing near $\gamma_{0}$. Otherwise the resulting function $w$ would be globally unbounded. Thus, the maximum value of $w_{1}$ and $w_{2}$ near $\gamma_{0}$ is attained at $\gamma_{0}$ where $w_{1}(\gamma_{0})=w_{2}(\gamma_{0})=s$.

If $s$ is negative, $w_{1}$ and $w_{2}$ are concave, and $w_{1}$ is decreasing and $w_{2}$ is increasing. Otherwise the resulting function $w$ would be globally unbounded. The minimum value of $w_{1}$ and $w_{2}$ near $\gamma_{0}$ is attained at $\gamma_{0}$ where $w_{1}(\gamma_{0})=w_{2}(\gamma_{0})=s$.

\begin{example}
	Let $\gamma_{0}=0$. Then
	\begin{equation*}
	w_{1}(\xi)=c_{1}e^{\xi}+c_{2}e^{-\xi} \quad \text{and} \quad w_{2}(\xi)=c_{2}e^{\xi}+c_{1}e^{-\xi},
	\end{equation*}
	where $c_{1}$ and $c_{2}$ are constants satisfying $\frac{c_{1}}{c_{2}}>e^{2}$, solve the differential equations \eqref{eq:EQN2} in $[-1,0]$ and $[0,1]$, respectively. Observe that $w_{1}$ is increasing in $[-1,0]$, $w_{2}$ is decreasing in $[0,1]$, $w_{1}(0)=w_{2}(0)$ and $w_{1,\xi}(0)=-w_{2,\xi}(0)$.
\end{example}

\begin{remark}
Note that the so-called multipeakon solutions are of the form $u(t,x)=\sum_{i=1}^n p_i(t)e^{-\vert x-q_i(t)\vert}$. Thus if one only glues together local, traveling waves, which have bounded derivatives, one ends up with a multipeakon solution due to \eqref{eq:EQN2}.
\end{remark}

\textbf{Unbounded derivatives}

Since we have from \eqref{eq:CHTWClassical1ii}, $w_{\xi}^{2}=w^{2}-\frac{2a_{1}w+b_{i}}{w-s}$ in $\overline{D_{i}^{\varepsilon}}$, it follows that $s=w(\gamma_0)$ at the possible glueing point. Furthermore, due to \eqref{eq:Res2CH} the constants $b_{1}$ and $b_{2}$ do not have to be identical. 

Note that \eqref{eq:Res2CH} implies that it is possible to glue together both constant and non-constant local, classical solutions as long as $s=w(\gamma_0)$. This means in particular that one can insert constant parts by gluing. 

\vspace{5pt}

In \cite{Len}, Lenells presents a complete classification of weak, bounded traveling waves for the CH equation. He shows that there exists a wide range of waves, such as smooth waves, but also peakons, cuspons, stumpons, and composite waves which might have singularities. 

Lenells proves that two traveling waves $w_{1}$ and $w_{2}$ can only be glued together at a point $\xi$ if the wave height equals the wave speed, i.e., $w_{1}(\xi)=w_{2}(\xi)=s$, and if the constants $a_{1}$ and $a_{2}$ from \eqref{eq:CHTWClassical1ii} are equal. We remark that the constants $a$ and $c$ in \cite{Len} corresponds to $2a$ and $s$ here, respectively, and that we assume $k=0$. 

Our main objective was to recover these conditions by using the method presented above. Showing other important features of traveling wave solutions of the CH equation requires the machinery used by Lenells, which we outline next. For a detailed description we refer to \cite{Len}. A key property is that the maximum value of the wave equals $s$ for $s>0$ and the minimum value equals $s$ for $s<0$.

In particular, we highlight the role the constant $b_{i}$ plays in obtaining a bounded wave. Assume that we are in our usual setting where we have classical solutions $w_{1}$ and $w_{2}$ in $D_{1}^{\varepsilon}$ and $D_{2}^{\varepsilon}$, respectively. We want to glue these waves together. Thus, we must have $w_{1}(\gamma_{0})=w_{2}(\gamma_{0})=s$ and $a_{1}=a_{2}$. Hence, by introducing $f(w)=-w^{3}+sw^{2}+2a_{1}w$, we can write \eqref{eq:CHTWClassical1ii} as 
\begin{equation}
\label{eq:wxi}
	w_{\xi}^{2}(s-w)=f(w)+b_{i}=g_{i}(w) 
\end{equation}
in $\overline{D_{i}^{\varepsilon}}$, $i=1,2$. Note that $g_{1}'(w)=g_{2}'(w)=f'(w)=-3w^{2}+2sw+2a_{1}$. In what follows we assume that $s>0$.

If $s^{2}+6a_{1}\leq 0$, then $g_{1}'(w)\leq -3w^{2}+2sw-\frac{s^{2}}{3}=-3\big(w-\frac{s}{3}\big)^{2}$, which is strictly negative provided that $w$ is not identically equal to $\frac{s}{3}$. This means that $g_{1}(w)$ is strictly decreasing and $f(w)+b_{1}$ has exactly one zero. Assume that $b_{1}$ is such that $f(s)+b_{1}<0$. By continuity we have $f(w)+b_{1}<0$ for $w$ near $s$. Then \eqref{eq:wxi} implies that $w>s$. Since $f(w)+b_{1}<0$ for all $w>s$, \eqref{eq:wxi} shows that $w_{\xi}\neq 0$ for all $w>s$. Thus, $w$ is strictly monotone and unbounded. Next, let us set $b_{1}$ larger so that $f(s)+b_{1}>0$. Then $f(w)+b_{1}>0$ for $w$ near $s$ and from \eqref{eq:wxi} we get $w<s$. We have $f(w)+b_{1}>0$ for all $w<s$, so \eqref{eq:wxi} implies that $w_{\xi}\neq 0$ for all $w<s$. Hence, $w$ is strictly monotone and unbounded. The situation $f(s)+b_{1}=0$ can be treated similarly, showing that there are no bounded solutions. Thus, if $s^{2}+6a_{1}\leq 0$, $f(w)+b_{1}$ has one zero and there exist no bounded solutions to \eqref{eq:wxi}.

If $s^{2}+6a_{1}>0$, then $g_1(w)=f(w)+b_1$ has at least one zero, but the number of zeros is dependent on the choice of $b_{1}$. To be more precise the function $g_{1}(w)$ has a local minimum and maximum at
$w_{\text{min}}=\frac{s-\sqrt{s^{2}+6a_{1}}}{3}$ and $w_{\text{max}}=\frac{s+\sqrt{s^{2}+6a_{1}}}{3}$, respectively. It is strictly decreasing for $w<w_{\text{min}}$ and $w>w_{\text{max}}$.

If $g_1$ has only one zero, we can show as before (i.e. in the case $s^2+6a_1\leq 0$), that there only exist unbounded solutions.

Consider the case where $g_{1}$ has three zeros. Moreover, we consider $s$ between two of the zeros of $g_{1}$, since any other case yields unbounded solutions. 

First we treat the case where $g_{1}$ has a double zero and a simple zero. The double zero is either the local minimum or maximum of $g_{1}$. We only consider the case where the double zero is the local minimum of $g_{1}$, see Figure~\ref{DZero2}, since the other one follows the same lines. Denoting the simple zero by $\eta$, we write
\begin{figure}
	\centering
	\begin{minipage}[t]{0.49\textwidth}
		\includegraphics[width=\textwidth]{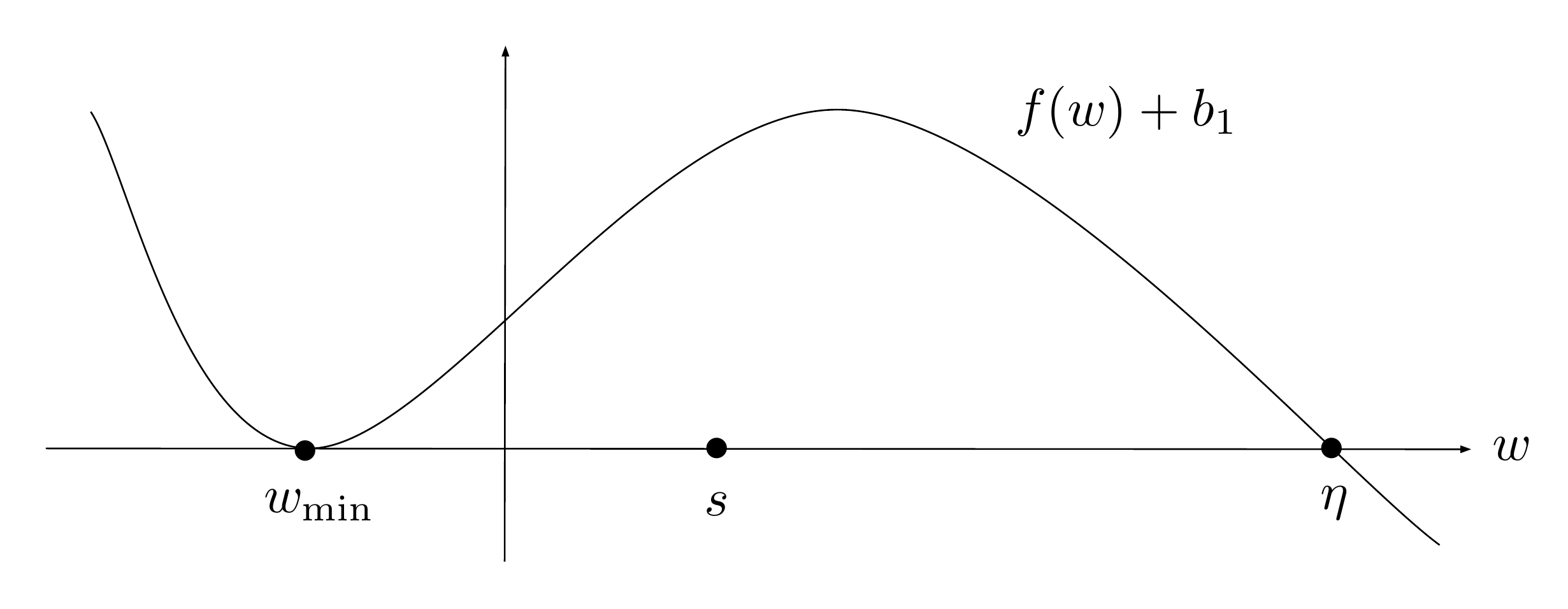}
		\caption{}{Sketch of the function $g_1$ with a double zero at $w=w_{\min}$ and a simple zero at $w=\eta$. Furthermore, $w_{\min}<s<\eta$.}
		\label{DZero2}
	\end{minipage}
	\hfill
	\begin{minipage}[t]{0.49\textwidth}
		\includegraphics[width=\textwidth]{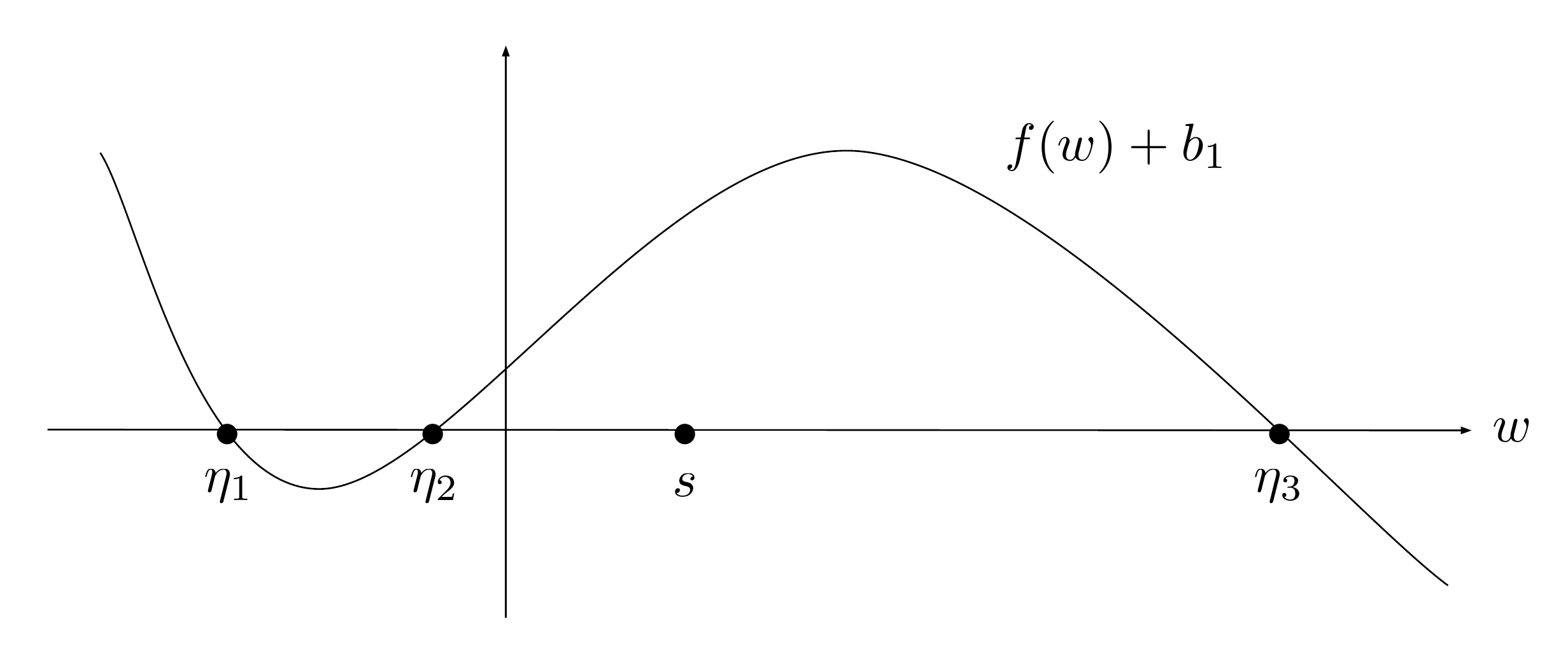}
		\caption{}{Sketch of the function $g_1$ with three simple zeros $\eta_1<\eta_2<\eta_3$. Furthermore, $\eta_2<s<\eta_3$.}
		\label{SZero2}
	\end{minipage}
\end{figure}
\begin{equation}\label{Lcases}
	g_{1}(w)=-(w-w_{\text{min}})^{2}(w-\eta).
\end{equation}
Expanding this expression and comparing it with $g_{1}(w)=f(w)+b_{1}$ we get the relations $\eta+2w_{\text{min}}=s$, $-2\eta w_{\text{min}}-w_{\text{min}}^{2}=2a_{1}$, and $\eta w_{\text{min}}^{2}=b_{1}$.

By assumption $w_{\text{min}}<s<\eta$. Then $g_{1}(s)>0$, so that $g_{1}(w)>0$ for $w$ near $s$. By \eqref{eq:wxi}, $w<s$. Note that $w_{\xi}=0$ for the value $w=w_{\text{min}}$. We have
\begin{equation}
\label{eq:cuspDecay}
	w_{\xi}=\pm(w-w_{\text{min}})\sqrt{\frac{\eta-w}{s-w}}
\end{equation}
and $w$ exponentially decays to $w_{\text{min}}$ as $\xi\rightarrow\pm\infty$. With the notation above, we see that one possibility is to choose $w_1$ and $w_2$ to be solutions to \eqref{eq:cuspDecay}, which yields the cuspon with decay. In particular, $w_{1}$ is the strictly increasing part, and $w_{2}$ the strictly decreasing part. Note that the derivatives are unbounded at $w_{1}=w_{2}=s$. 

Let us investigate if we can glue waves $w_i$, $i=1,2$ given by \eqref{eq:cuspDecay} to constant solutions. Consider $w_1$ given by \eqref{eq:cuspDecay}. From \eqref{eq:wxi}, we have
$w_{2,\xi}^{2}=\frac{g_{2}(w_{2})}{s-w_{2}}$. We are looking for solutions satisfying $w_{2,\xi}=0$. Since at the gluing point we have $w_{1}=w_{2}=s$, we require that $g_{2}(w)=(d-w)(s-w)^2$ for some constant $d$. Comparing the coefficients, yields the relations $d=-s$, $2a_1=s^2$, and $b_2=-s^3$. Hence, if $s^{2}=2a_{1}$ we can glue $w_{i}$ as given by \eqref{eq:cuspDecay} with the constant solution $w_{i\pm1}=s$, which are the building blocks for so-called stumpons.

\begin{remark}
Note that the condition $s^2=2a_1$ is related to \eqref{eq:condpeak}, which describes all local, classical traveling waves that have a bounded derivative at points where $w=s$. In particular, \eqref{eq:condpeak} implies that peakons can only turn up in bounded, composite waves such that $s^2\geq 2a_1$ and the case $s^2=2a_1$ corresponds to the constant solution.
\end{remark} 

Let us turn back to \eqref{Lcases}. If $s=\eta$, then
\begin{equation}
\label{eq:peakDecay}
	w_{\xi}=\pm(w-w_{\text{min}}).
\end{equation} 
In particular, $w$ is monotone and decays to $w_{\text{min}}$ as $\xi\rightarrow\pm\infty$. Choosing $w_1$ and $w_2$ to be solutions to \eqref{eq:peakDecay} yields the peakon with decay. In particular, $w_{1}$ is the strictly increasing part and $w_{2}$ the strictly decreasing one.

Let us see if we can glue waves given by \eqref{eq:peakDecay} to constant solutions. Let $w_{1}$ be the strictly increasing function given by \eqref{eq:peakDecay}. The graph of the function $g_{2}(w)=f(w)+b_{2}$ is equal to the one for $g_{1}$ up to a vertical shift, i.e., there is a constant $\alpha$ such that $g_{2}(w)=-(w-w_{\text{min}})^{2}(w-s)+\alpha$. From \eqref{eq:wxi}, we get $w_{2,\xi}^{2}=(w_{2}-w_{\text{min}})^{2}+\frac{\alpha}{s-w}$. Observe that the only choice of the constant $\alpha$ which gives a solution with bounded derivative is $\alpha=0$. Then we get
$w_{2,\xi}^{2}=(w_{2}-w_{\text{min}})^{2}$, and the only possibility for $w_{2,\xi}=0$ is if $w_{2}=w_{\text{min}}$. But then $w_{2}$ can not be glued to $w_{1}$, as $w_{\text{min}}\neq s$. We conclude that waves given by \eqref{eq:peakDecay} cannot be glued to constant solutions.

Next we treat the case where $g_{1}$ has three simple zeros $\eta_{1}<\eta_{2}<\eta_{3}$, i.e., $g_{1}(w)=-(w-\eta_{1})(w-\eta_{2})(w-\eta_{3})$, see Figure~\ref{SZero2}.

Let $s$ be such that $\eta_{2}<s<\eta_{3}$. Then $g_{1}(s)>0$, so that $g_{1}(w)>0$ for $w$ near $s$. By \eqref{eq:wxi}, $w<s$. Observe that $w_{\xi}=0$ at $w=\eta_{2}$. We have
\begin{equation}
\label{eq:cuspPer}
	w_{\xi}=\pm\sqrt{w-\eta_{2}}\,h(w),
\end{equation}
where $h(w)=\sqrt{-\frac{(w-\eta_{1})(w-\eta_{3})}{s-w}}>0$ for all $\eta_{2}<w<s$, so that $w$ attains the value $\eta_{2}$ at some finite point $\bar \xi$. Note that the solution to \eqref{eq:cuspPer} is not unique. Thus, $w\in C^1$ can be defined in such a way that $w$ attains its minimum at $\bar\xi$, is strictly decreasing to the left of $\bar \xi$, and strictly increasing to the right of $\bar \xi$. Gluing countably many of these waves together yields a periodic cuspon. 

If $s=\eta_{3}$, then
\begin{equation}
\label{eq:peakPer}
	w_{\xi}=\pm\sqrt{w-\eta_{1}}\sqrt{w-\eta_{2}},
\end{equation}
whose solutions, following the same lines as above, serves as building blocks for a periodic peakon.

In a similar way as above, we can study if the waves given by \eqref{eq:cuspPer} and \eqref{eq:peakPer} can be glued to constant solutions. We find that we can only glue waves given by \eqref{eq:cuspPer} to constant solutions which are equal to $s$. Then we obtain stumpons, which consist of monotone segments glued at points where the derivative is unbounded to piecewise constants parts.

\end{document}